\newcommand{\N}{\mathbb{N}}
\newcommand{\R}{\mathbb{R}}
\newcommand{\erre}{\mathbb{R}}
\newcommand\uno{{(1)}}
\newcommand\due{{(2)}}
\newcommand\kkk{{(k)}}
\newtheorem{example}{Example}
\newtheorem{definition}{Definition}
\newtheorem{proposition}{Proposition}
\newtheorem{theorem}{Theorem}
\newtheorem{lemma}{Lemma}
\newtheorem{remark}{Remark}
\begin{document}

\title{Hardy Type Inequalities for $\Delta_\lambda$-Laplacians}
\author{Alessia E. Kogoj}
\email{alessia.kogoj@unibo.it}
\address{Dipartimento di Matematica, Universit\`a di Bologna\\
Piazza di Porta San Donato, 5, IT-40126 Bologna  - Italy}

\author{Stefanie Sonner}
\email{sonner@mathematik.uni-kl.de}
\address{Felix-Klein-Center for Mathematics, University of Kaiserslautern\\
Paul-Ehrlich-Str. 31, D-67663 Kaiserslautern - Germany\\
BCAM - Basque Center for Applied Mathematics\\
Mazarredo, 14 E-48009 Bilbao, Basque Country - Spain}
\footnote{Until May 2014 the second author was supported by the ERC Advanced Grant FPT-246775 NUMERIWAVES.}
%\cortext[mycorrespondingauthor]{Corresponding author}

\keywords{Hardy inequalities; sub-elliptic operators; Grushin operator}
\subjclass[2010]{35H20; 26D10; 35H10}

\maketitle

\begin{abstract}
We derive Hardy type inequalities for a large class of
sub-elliptic operators that belong to the class of $\Delta_\lambda$-Laplacians and find explicit values for the constants involved.
Our results generalize previous  inequalities 
obtained for Grushin type operators 
$$
\Delta_{x}+ |x|^{2\alpha}\Delta_{y},\qquad\ (x,y)\in\mathbb{R}^{N_1}\times\mathbb{R}^{N_2},\  \alpha\geq 0,
$$
which were proved to be sharp.
\end{abstract}

\section{Introduction}\label{sec_intro}

Let $\Omega\subset \R^N$ be a domain, where $N\geq 3.$ 
The $N$-dimensional version of the
classical Hardy inequality states that there exists a constant $c>0$ such that 
$$
c\int_\Omega\frac{|u(x)|^2}{|x|^2}dx\leq \int_\Omega |\triangledown u(x)|^2dx,
$$
for all  $u\in H_0^1(\Omega).$
If the origin $\{0\}$ belongs to the set $\Omega,$ 
the optimal constant is $c=\left(\frac{N-2}{2}\right)^2,$ 
but not attained in $H_0^1(\Omega).$ 
Hardy originally proved this inequality in 1920 for the one-dimensional case.

Hardy inequalities are an important tool in the analysis of linear and non-linear PDEs (see, e.g., \cite{BrVa},\cite{DAm2},\cite{VaZu}),
and over the years the classical Hardy inequality has been improved and extended in many directions.  
Our aim is to derive Hardy type inequalities for a class of degenerate elliptic operators extending 
previous results by D'Ambrosio in \cite{DAm}. 
He obtained a family of Hardy type inequalities for the Grushin type operator
$$
\Delta_{x}+ |x|^{2\alpha}\Delta_{y},\qquad \ (x,y)\in\R^{N_1}\times\R^{N_2},
$$
where $\alpha$ is a real positive constant. 
The class of operators we consider contains Grushin type operators and, e.g., operators of the form
\begin{eqnarray*}
 \Delta_{x}+ |x|^{2\alpha}\Delta_{y} + |x|^{2\beta}|y|^{2\gamma} \Delta_{z},
\qquad (x,y,z)\in\R^{N_1}\times\R^{N_2}\times\R^{N_3},
\end{eqnarray*}
 where $\alpha,\beta$ and $\gamma$ are real positive constants.

Recently, for Grushin type operators improved Hardy inequalities were obtained in \cite{ SuYa,YaSuKo},
Hardy inequalities involving the control distance in  \cite{Xi} and Hardy inequalities in half spaces with the degeneracy at the boundary in \cite{Li}.

%%%%%%%%
After the seminal paper \cite{GaLa} by Garofalo and Lanconelli,
where the Hardy inequality  for the Kohn Laplacian on the Heisenberg group was proved, a large amount of work has been devoted to Hardy type inequalities in sub-elliptic settings. For a wide bibliography regarding this topics we directly refer to the paper \cite{DAm2} by D'Ambrosio.

%%%%%%%%
%Hardy type inequalities have been derived for other classes of degenerate elliptic operators 
% such as the Kohn Laplacian on the Heisenberg group. For a wide bibliography regarding Hardy type
%inequalities in the sub-elliptic setting we refer to \cite{DAm2}. 

 The proof of our inequalities is based on an approach introduced by Mitidieri in \cite{Mi} 
 for the classical Laplacian. 
 Our results coincide for the particular case of Grushin type operators with the inequalities 
 D'Ambrosio obtained in \cite{DAm}, where he proved that the inequalities are sharp. 
 We derive explicit values for the constants in the 
  inequalities, but are currently not able
 to show its optimality in the general case.
 
The outline of our paper is as follows:
We first introduce the class of operators we consider and formulate several examples. 
In Section \ref{sec_approach} we explain our approach to derive Hardy type inequalities and give a motivation for the
weights  appearing in the inequalities. 
The main results are stated and proved in Section \ref{sec_hardy}.
In the appendix we illustrate the relation between the fundamental solution and Hardy inequalities and 
comment on the difficulties we encounter proving the optimality of the constant in our inequalities.

\section{ $\Delta_\lambda$-Laplacians}\label{sec_def}

Here and in the sequel, we use the following notations. 
We split $\R^N$ into 
$$
\R^N=\R^{N_1}\times\cdots\times\R^{N_k}, \qquad
$$ 
and write 
$$
x=(x^{(1)},\dots,x^{(k)})\in \R^N,\qquad x^{(i)}= (x^{(i)}_1,\dots,x^{(i)}_{N_i}),\qquad i=1,\dots,k.
$$
The degenerate elliptic operators we consider are of the form
$$
\Delta_\lambda=\lambda_1^2\Delta_{x^{(1)}}+\cdots +\lambda_k^2\Delta_{x^{(k)}}, 
$$
where the functions $\lambda_i:\R^{N}\rightarrow\R$ are pairwise different and  $\Delta_{x^{(i)}}$ denotes the classical Laplacian in $\R^{N_i}.$
We denote by $|x|$ the euclidean norm of $x\in\R^m,\ m\in\N,$
and assume the functions $\lambda_i$ are of the form
\begin{align*}
\lambda_1(x)&= 1,\\ 
\lambda_2(x)&=|x^{(1)}|^{\alpha_{21}},\\
\lambda_3(x)&=|x^{(1)}|^{\alpha_{31}}|x^{(2)}|^{\alpha_{32}},\\
&\ \, \vdots \\
\lambda_k(x)&= |x^{(1)}|^{\alpha_{k1}}|x^{(2)}|^{\alpha_{k2}}\cdots|x^{(k-1)}|^{\alpha_{k k-1}},\qquad x\in\R^N,
\end{align*}
where $\alpha_{ij}\geq 0$ for $i=2,\dots,k, j=1,\dots,i-1.$
Setting $\alpha_{ij}=0$ for $j\geq i$ we can write 
\begin{align}\label{lam}
\lambda_i(x)=\prod_{j=1}^{k}|x^{(j)}|^{\alpha_{ij}}, \qquad i=1,\dots,k.
\end{align}

This implies that there exists a \textit{group of dilations} $(\delta_r)_{r>0},$
\begin{eqnarray*}
\delta_r:\R^N\rightarrow\R^N,\quad \delta_r(x)=\delta_r(x^{(1)},\dots,x^{(k)})=(r^{\sigma_1}x^{(1)},\dots,r^{\sigma_k}x^{(k)}),
\end{eqnarray*}
where $1=\sigma_1\leq\sigma_i$ %2\leq\dots\leq\sigma_k,$ 
such that 
 $\lambda_i$ is $\delta_r$-\emph{homogeneous of degree} $\sigma_i-1$, i.e.,
\begin{eqnarray*}
\label{dilations}\lambda_i(\delta_r(x))=r^{\sigma_i-1}\lambda_i(x),\qquad \forall x\in\R^N,\ r>0,\ i=1,\dots,k,
\end{eqnarray*}
and  the operator $\Delta_\lambda$ is $\delta_r$-homogeneous of degree two, i.e.,
\begin{eqnarray*}
\Delta_\lambda (u(\delta_r(x)))=r^2 (\Delta_\lambda u) (\delta_r(x))\qquad \forall u\in C^\infty (\erre^N).
\end{eqnarray*}

We denote by $Q$ the {\it homogeneous dimension} of $\erre^N$ with respect to the group of dilations $(\delta_r)_{r>0}$, i.e.,
$$
Q:=\sigma_1N_1+\dots+\sigma_k N_k.
$$
$Q$ will play the same role as the dimension $N$ for the classical Laplacian in our Hardy type inequalities. 

For functions $\lambda_i$ of the form \eqref{lam} we find 
\begin{align*}
\sigma_1&= 1,\\
\sigma_2&=1+\sigma_1\alpha_{21},\\ 
\sigma_3&=1+\sigma_1\alpha_{31}+\sigma_2\alpha_{32}, \\
&\ \vdots \\
\sigma_k&= 1+\sigma_1\alpha_{k1}+ \sigma_2\alpha_{k2} +\cdots+\sigma_{k-1}  \alpha_{k k-1}.
\end{align*}

If the functions $\lambda_i$ are smooth, i.e., if the exponents $\alpha_{ji}$ are integers, 
the operator $\Delta_\lambda$ belongs to the general class of
operators studied by H\"ormander in \cite{Ho} and it is hypoelliptic %, i.e. the distributional solution of $\Delta_\lambda=f$ are smooth when $f$ is smooth 
(see Remark 1.3, \cite{KoLa}).
The simplest example is the operator 
$$
\partial_{x_1}^2+ |x_1|^{2\alpha}\partial_{x_2}^2,\qquad x=(x_1,x_2)\in\R^2,\ \alpha\in\N,
$$
where $\partial_{x_i}=\frac{\partial}{\partial_{x_i}}, \ i=1,2,$ that Grushin studied in \cite{Gr}. 
He provided a complete characterization of the hypoellipticity for such operators when lower terms with
complex coefficients are added.
For real $\alpha > 0$ the operator is commonly called of \textit{Grushin-type}.
\\

Operators $\Delta_\lambda$ with functions $\lambda_i$ of the form \eqref{lam} 
belong to the class of $\Delta_\lambda$\textit{-Laplacians.} 
%which were introduced in \cite{KoLa}.
Franchi and Lanconelli introduced operators of $\Delta_\lambda$-Laplacian type in 1982 and studied their properties in a series of papers.  
In \cite{FrLa1} they defined a metric associated to these operators that plays the same role as the euclidian metric for the standard Laplacian. 
Using this metric in \cite{FrLa2} and \cite{FrLa3} they extended the classical De Giorgi theorem and obtained Sobolev type embedding theorems 
for such operators.
% without assuming the homogeneity property $(iv)$.  
Recently, adding the assumption that 
the operators are homogeneous of degree two, they were named $\Delta_\lambda$-Laplacians by Kogoj and Lanconelli
in \cite{KoLa}, where existence, non-existence and regularity results for solutions of the semilinear $\Delta_\lambda$-Laplace equation 
were analyzed.
The global well-posedness and longtime behavior of solutions of semilinear degenerate parabolic equations involving 
$\Delta_\lambda$-Laplacians were studied in \cite{KoSo}, and this result was extended in \cite{KoSo2}, where also hyperbolic problems were considered. We finally remark that the $\Delta_\lambda$-Laplacians 
belong to the more general class of $X$\textit{-elliptic operators} introduced in \cite{LaKo}.
For these operators Hardy inequalities of other kind with weights determined by the control distance were proved by Grillo in \cite{Gri}.\\ 

To conclude this section we recall some of the examples 
in our previous paper \cite{KoSo}.
\begin{example}\label{ex1}
Let $\alpha$ be a real positive constant and $k=2$. We consider the Grushin-type operator
\begin{eqnarray*}
\Delta_\lambda=\Delta_{x^\uno}+ |x^\uno|^{2\alpha} \Delta_{x^{(2)}},%\quad  \mbox{in } \erre^{N_1}\times\erre^{N_2},
\end{eqnarray*}
where $\lambda=(\lambda_1,\lambda_2)$, with $\lambda_1(x)=1$ and $\lambda_2(x) = |x^\uno|^{\alpha},$ $x\in\R^{N_1}\times\R^{N_2}$.
Our group of dilations is
\begin{eqnarray*}
\delta_r\left(x^\uno,x^\due\right)=\left(r x^\uno, r^{\alpha+1} x^\due\right),
\end{eqnarray*}
and the homogenous dimension with respect to %this group of dilations 
$(\delta_r)_{r>0}$ is $Q=N_1 +N_2(\alpha+1)$.

More generally, for a given multi-index $\alpha=(\alpha_1,\ldots,\alpha_{k-1})$ with 
real constants $\alpha_i> 0$, $i=1,\ldots,k-1,$ we consider 
\begin{eqnarray*}
\Delta_\lambda = \Delta_{x^\uno} + |x^\uno|^{2\alpha_1} \Delta_{x^{(2)}} +\ldots+ |x^\uno|^{2\alpha_{k-1}} \Delta_{x^\kkk}.%\quad  \mbox{in } \erre^{N}.
\end{eqnarray*}
 The group of dilations is given by
 \begin{eqnarray*} 
\delta_r\left(x^\uno, \ldots,x^\kkk\right)=\left(r x^\uno,r^{1+\alpha_1}x^{(2)},\ldots, r^{1+\alpha_{k-1}} x^\kkk\right),
\end{eqnarray*}
and the homogeneous dimension is 
$Q=N+\alpha_1N_2+\alpha_2N_3+\cdots+\alpha_{k-1}N_k.$ 
\end{example}

\begin{example}\label{ex2}
For a given multi-index $\alpha=(\alpha_1,\ldots,\alpha_{k-1})$ with 
real constants $\alpha_i> 0$, $i=1,\ldots,k-1,$ we define 
\begin{eqnarray*}
\Delta_\lambda = \Delta_{x^\uno} + |x^\uno|^{2\alpha_1} \Delta_{x^\due} + |x^{(2)}|^{2\alpha_2} \Delta_{x^{(3)}} +\ldots+ |x^{(k-1)}|^{2\alpha_{k-1}} \Delta_{x^\kkk}.%\quad  \mbox{in } \erre^{N}.
\end{eqnarray*}
Then, in our notation $\lambda=\left(\lambda_1,\ldots,\lambda_k\right)$ with 
%$\lambda_j^\uno (x)\equiv 1, j=1,\ldots,N_1$
\begin{align*} 
\lambda_1 (x)&= 1,\quad
\lambda_i(x)= |x^{(i-1)}|^{\alpha_{i-1}}, \ i=2,\ldots, k,\quad x\in\R^{N_1}\times\cdots\times\R^{N_k},
\end{align*}
and the group of dilations is given by
\begin{eqnarray*} 
%\delta_r:\erre^N\ttende\erre^N,\ 
\delta_r\left(x^\uno, \ldots,x^\kkk\right)=\left(r^{\sigma_1} x^\uno,\ldots, r^{\sigma_k} x^\kkk\right)
\end{eqnarray*}
with $\sigma_1 =1$ and $\sigma_i =\alpha_{i-1} \sigma_{i-1} +1$ for $i=2,\ldots,k$.
In particular, if 
$\alpha_1=\ldots=\alpha_{k-1} =\alpha$,  the dilations become
\begin{eqnarray*}
\delta_r \left(x^\uno, \ldots, x^\kkk\right) = \left( r x^\uno, r^{\alpha+1} x^{(2)},\ldots, r^{\alpha^{k-1}+\ldots+\alpha+1} x^\kkk\right).
\end{eqnarray*}
\end{example}

\begin{example}\label{ex3}  
Let $\alpha, \beta$ and $\gamma$ be positive real constants. For the operator 
\begin{eqnarray*}
\Delta_\lambda =\Delta_{x^\uno} + |x^\uno|^{2\alpha} \Delta_{x^\due} + |x^\uno|^{2\beta} |x^\due|^{2\gamma} \Delta_{x^{(3)}},
\end{eqnarray*} 
where $\lambda= (\lambda_1,\lambda_2,\lambda_3)$ with
\begin{align*} 
\lambda_1 (x)= 1,\quad
\lambda_2 (x)= |x^{(1)}|^{\alpha},\quad
\lambda_3(x) = |x^{(1)}|^{\beta}|x^{(2)}|^{\gamma},\quad x\in\R^{N_1}\times\R^{N_2}\times\R^{N_3},
\end{align*}
we find the group of dilations
\begin{eqnarray*} 
\delta_r\left(x^\uno,x^\due,x^{(3)}\right)=\left ( r x^\uno, r^{\alpha+1} x^\due, r^{\beta + (\alpha +1)\gamma +1} x^{(3)}\right).
\end{eqnarray*} 
\end{example}

\section{How we approach Hardy-type inequalities}\label{sec_approach}

Our Hardy type inequalities are based on the following approach 
indicated by Mitidieri in \cite{Mi}.

Let $\Omega\subset \R^N,$ $N\geq 3,$ be an open subset and $p>1.$
We assume $u\in C_0^1(\Omega),$ and the vector field $h\in C^1(\Omega;\mathbb{R}^N)$ satisfies $\textnormal{div} h>0.$
The divergence theorem implies
\begin{align*}
\int_{\Omega}|u(x)|^p \textnormal{div} h(x)\,dx = -p\int_{\Omega} |u(x)|^{p-2}u(x)\triangledown u(x)\cdot  h(x)\,dx, 
\end{align*}
where $\cdot$ denotes the inner product in $\R^N.$
Taking the absolute value and using H\"older's inequality we obtain
\begin{align*}
\int_{\Omega}|u(x)|^p \textnormal{div} h(x)dx &=-p\int_{\Omega} |u(x)|^{p-2}u(x)\triangledown u(x)\cdot  h(x)dx\\ 
&\leq  
p\left( \int_{\Omega} |u(x)|^{p} \textnormal{div} h(x)dx\right)^{\frac{p-1}{p}} 
\left( \int_{\Omega}  \frac{|h(x)|^p}{ (\textnormal{div} h(x))^{p-1}}|\triangledown u(x)|^{p}dx\right)^{\frac{1}{p}},
\end{align*}
and it follows that
\begin{align}\label{hardy}
\int_{\Omega}|u(x)|^p \textnormal{div} h(x)dx
\leq 
p^p \int_{\Omega}  \frac{|h(x)|^p}{ (\textnormal{div} h(x))^{p-1}}|\triangledown u(x)|^{p}dx.
\end{align}

If we choose the vector field 
$$
h_\varepsilon(x):=\frac{x}{(|x|^2+\varepsilon)^{\frac{p}{2}}},
$$
where $\varepsilon>0,$ %and  $x+\varepsilon:=(x_1+\varepsilon,\dots,x_N+\varepsilon),$
then
\begin{align*}
\textnormal{div} h_\varepsilon(x) = \frac{N-p\frac{|x|^2}{|x|^2+\varepsilon}}{(|x|^2+\varepsilon)^{\frac{p}{2}}},\qquad |h_\varepsilon(x)|=\frac{|x|}{(|x|^{2}+\varepsilon)^{\frac{p}{2}}}.
\end{align*}
Assuming that $N>p$ we have $\textnormal{div} h_\varepsilon>0,$ 
and from inequality \eqref{hardy} we obtain
$$
\frac{1}{p^p}\int_\Omega\left(N- p\frac{|x|^2}{|x|^2+\varepsilon}\right)\frac{|u(x)|^p}{(|x|^2+\varepsilon)^{\frac{p}{2}}}\,  dx
\leq 
 \int_{\Omega}\left(N- p\frac{|x|^2}{|x|^2+\varepsilon}\right)^{-(p-1)}\frac{|x|^p}{(|x|^2+\varepsilon)^{\frac{p}{2}}} |\triangledown u(x)|^{p}\, dx.
$$
Taking the limit $\varepsilon$ tends to zero, the classical Hardy inequality 
follows from the dominated convergence theorem,$$
\left( \frac{N-p}{p}\right)^p\int_{\Omega}\frac{|u(x)|^p}{|x|^p} \, dx
\leq 
 \int_{\Omega} |\triangledown u(x)|^{p}\, dx,
$$
 and by a density argument it is satisfied
for all functions $u\in H_0^1(\Omega).$
If the origin $\{0\}$ belongs to the domain $\Omega,$ the constant $\frac{N-p}{p}$ is optimal, but not attained in $H_0^1(\Omega).$
\\

This approach can be generalized to deduce 
Hardy type inequalities for degenerate elliptic operators. 
For the operators $\Delta_\lambda$ with functions $\lambda_i$ of the form \eqref{lam} and 
a function $u$ of class $C^1(\Omega)$ we define
$$
\triangledown_\lambda u:=(\lambda_1\triangledown_{x^{(1)}}u,\dots,\lambda_k\triangledown_{x^{(k)}}u),\qquad
\lambda_i\triangledown_{x^{(i)}}:=(\lambda_i\partial_{x_1^{(i)}},\dots,\lambda_i\partial_{x_{N_i}^{(i)}}),\ i=1,\dots,k.
$$
We will obtain a wide family of Hardy type inequalities, that include as particular cases inequalities of the form
\begin{align}
\left(\frac{Q-p}{p}\right)^p\int_\Omega\frac{|u(x)|^{p}}{[[x]]_\lambda^p}dx&\leq 
\int_\Omega\psi(x)|\triangledown_\lambda u(x)|^pdx,\label{hardy1}\\
\left(\frac{Q-p}{p}\right)^p\int_\Omega\varphi(x)\frac{|u(x)|^{p}}{[[x]]_\lambda^p}dx&\leq 
\int_\Omega|\triangledown_\lambda u(x)|^p dx,\label{hardy2}
\end{align}
where $Q$ is the homogeneous dimension,
and $\varphi$ and $\psi$ are suitable weight functions. Moreover, $[[\cdot]]_\lambda$ is a homogeneous norm
that replaces the euclidean norm in the classical Hardy inequality.
\\

We introduce the following notation. For a vector field $h$ of class $C^1(\Omega;\R^N)$  
we define
$$
\textnormal{div}_\lambda h :=\sum_{i=1}^k\lambda_i\textnormal{div}_{x^{(i)}} h,\qquad
\textnormal{div}_{x^{(i)}} h :=\sum_{j=1}^{N_i}\partial_{x_j^{(i)}} h.
$$
The subsequent lemma 
follows from the divergence theorem and can be shown similarly  as 
 inequality \eqref{hardy}.
See also Theorem 3.5 in \cite{DAm} for the particular case of Grushin-type operators.

\begin{lemma}\label{lem0}
Let $h\in C^1(\Omega;\R^N)$ be such that $\textnormal{div}_{\lambda} h\geq0.$
Then, for every $p>1$ and $u\in C_0^1(\Omega)$ such that 
$\frac{|h|}{(\textnormal{div}_{\lambda} h)^{\frac{p}{p-1}}}|\triangledown_{\lambda}u|\in L^p(\Omega)$ we have 
$$
\int_\Omega |u(x)|^p \textnormal{div}_{\lambda} h(x)\, dx \leq 
p^p \int_\Omega \frac{|h(x)|^p}{(\textnormal{div}_{\lambda} h(x))^{p-1}} |\triangledown_{\lambda} u(x)|^p\, dx. 
$$
\end{lemma}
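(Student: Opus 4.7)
The plan is to mimic exactly the Euclidean derivation of inequality \eqref{hardy} given just above, once we notice the following structural fact about the $\lambda_i$'s: since $\lambda_i(x)=\prod_{j=1}^k|x^{(j)}|^{\alpha_{ij}}$ with $\alpha_{ii}=0$, the weight $\lambda_i$ does not depend on $x^{(i)}$, i.e.\ $\nabla_{x^{(i)}}\lambda_i=0$. Writing $h=(h^{(1)},\dots,h^{(k)})$ with $h^{(i)}\in\R^{N_i}$ and introducing the auxiliary vector field $\lambda h:=(\lambda_1 h^{(1)},\dots,\lambda_k h^{(k)})$, this observation translates into the two identities
\begin{equation*}
\textnormal{div}(\lambda h)=\sum_{i=1}^k\textnormal{div}_{x^{(i)}}(\lambda_i h^{(i)})=\sum_{i=1}^k\lambda_i\textnormal{div}_{x^{(i)}}h=\textnormal{div}_\lambda h,
\qquad
\nabla u\cdot(\lambda h)=\nabla_\lambda u\cdot h,
\end{equation*}
so that the weighted objects on both sides of the inequality can be expressed in terms of the ordinary Euclidean divergence and gradient applied to $\lambda h$.

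With this reduction in hand I would first apply the classical divergence theorem to $|u|^p\lambda h$ (which is compactly supported since $u\in C_0^1(\Omega)$) to obtain, after the product rule,
\begin{equation*}
\int_\Omega|u(x)|^p\,\textnormal{div}_\lambda h(x)\,dx=-p\int_\Omega|u(x)|^{p-2}u(x)\,\nabla_\lambda u(x)\cdot h(x)\,dx.
\end{equation*}
I would then bound the right-hand side using the pointwise Cauchy--Schwarz estimate $|\nabla_\lambda u\cdot h|\le|\nabla_\lambda u|\,|h|$, split the integrand as
\begin{equation*}
|u|^{p-1}|\nabla_\lambda u|\,|h|=\Bigl(|u|^{p-1}(\textnormal{div}_\lambda h)^{(p-1)/p}\Bigr)\cdot\Bigl(\tfrac{|h|}{(\textnormal{div}_\lambda h)^{(p-1)/p}}|\nabla_\lambda u|\Bigr),
\end{equation*}
and apply H\"older's inequality with conjugate exponents $p/(p-1)$ and $p$. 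This yields
\begin{equation*}
\int_\Omega|u|^p\textnormal{div}_\lambda h\,dx\le p\Bigl(\int_\Omega|u|^p\textnormal{div}_\lambda h\,dx\Bigr)^{\!\frac{p-1}{p}}\Bigl(\int_\Omega\tfrac{|h|^p}{(\textnormal{div}_\lambda h)^{p-1}}|\nabla_\lambda u|^p dx\Bigr)^{\!\frac{1}{p}},
\end{equation*}
and dividing by the first factor on the right (the integrability hypothesis on $|h|(\textnormal{div}_\lambda h)^{-p/(p-1)}|\nabla_\lambda u|$ together with H\"older ensures the left-hand side is finite) and raising to the $p$th power gives exactly the claimed inequality.

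The main technical obstacle is the application of the divergence theorem: when some $\alpha_{ij}$ is not an integer the field $\lambda h$ is merely continuous across the coordinate subspaces $\{x^{(j)}=0\}$, $j<i$, so it is not $C^1$ on all of $\Omega$. I would handle this by truncating $\lambda h$ outside a shrinking tubular neighbourhood of $\bigcup_j\{x^{(j)}=0\}$ with a smooth cutoff, applying the divergence theorem to the truncated field, and letting the cutoff tend to the indicator of $\Omega$; since the singular set has Lebesgue measure zero and all integrands appearing are dominated by the integrable expressions provided by the hypothesis, dominated convergence allows us to pass to the limit. A secondary (minor) point is the case where $\textnormal{div}_\lambda h$ vanishes on a set of positive measure, which is resolved by the standard convention that the right-hand side is then $+\infty$ unless $|h||\nabla_\lambda u|$ vanishes there as well.
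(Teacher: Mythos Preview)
Your proof is correct and follows essentially the same route as the paper's: divergence theorem applied to $|u|^p$ against the $\lambda$-weighted field (the paper writes this via a diagonal matrix $\sigma$ rather than your $\lambda h$, but the content is identical), followed by Cauchy--Schwarz and H\"older with exponents $\tfrac{p}{p-1}$ and $p$. The one technical difference is that the paper regularizes $\textnormal{div}_\lambda h$ to $\textnormal{div}_\lambda h+\epsilon$ in the H\"older step and then lets $\epsilon\to0$ by dominated convergence, which sidesteps the division-by-zero issue you handle by convention; note also that finiteness of the left-hand side follows directly from $u\in C_0^1(\Omega)$ and continuity of $\textnormal{div}_\lambda h$, not from the integrability hypothesis you invoke.
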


\begin{proof}
We define 
$$ \sigma:=
\left(
\begin{matrix}
  I_{1} & 0 & \cdots  &0   \\
  0 & \lambda_2 I_{2} &  & \vdots\\
  \vdots&&\ddots&0\\[1.2ex]
  0 & \cdots&0& \lambda_k I_{k}
 \end{matrix}\right),
$$
where $I_{i}$ denotes the identity matrix in $\R^{N_i},$ $i=1,\dots,k.$
The divergence theorem implies
\begin{align*}
0&=\int_{\partial\Omega}|u|^p h\cdot \sigma \nu\, d\zeta=\int_\Omega \textnormal{div}_{\lambda}(|u|^p h)\,dx
=\int_\Omega p|u|^{p-2}u\triangledown_{\lambda} u \cdot h\, dx+\int_\Omega |u|^p \textnormal{div}_{\lambda}h\, dx,
\end{align*}
where $\nu$ denotes the outward unit normal at $\zeta\in\partial \Omega.$ Applying H\"older's inequality we obtain
\begin{align*}
\int_\Omega |u|^p \textnormal{div}_{\lambda}h\, dx&=-\int_\Omega p|u|^{p-2}u\triangledown_{\lambda} u \cdot h\, dx\leq
\int_\Omega p|u|^{p-1}|\triangledown_{\lambda} u| |h| \,dx\\
&\leq p\left( \int_\Omega |u|^{p}  (\textnormal{div}_{\lambda}h+\epsilon) \, dx\right)^{\frac{p-1}{p}} \left(\int_\Omega \frac{|h|^p}{ (\textnormal{div}_{\lambda}h +\epsilon)^{p-1}} |\triangledown_{\lambda} u|^p \,dx\right)^{\frac{1}{p}},
\end{align*}
and consequently,
\begin{align*}
\frac{\int_\Omega |u|^p \textnormal{div}_{\lambda}h\, dx}{\left( \int_\Omega |u|^{p}  (\textnormal{div}_{\lambda}h+\epsilon) \, dx\right)^{\frac{p-1}{p}}}&
\leq p\left(\int_\Omega \frac{|h|^p}{ (\textnormal{div}_{\lambda}h +\epsilon)^{p-1}} |\triangledown_{\lambda} u|^p \,dx\right)^{\frac{1}{p}}.
\end{align*}
The statement of the lemma now follows from the dominated convergence theorem.
\end{proof}

To illustrate our approach we first consider Hardy type inequalities of the form \eqref{hardy1}, i.e., 
\begin{align*}
\left(\frac{Q-p}{p}\right)^p\int_\Omega\frac{|u(x)|^{p}}{[[x]]_\lambda^p}\,dx&\leq 
\int_\Omega \psi(x)|\triangledown_\lambda u(x)|^p\,dx,
\end{align*}
with a certain weight function $\psi$ and homogeneous norm $[[ \cdot ]]_\lambda.$

Motivated by Lemma \ref{lem0} we look for a function $h$ satisfying  
$$
\textnormal{div}_\lambda h(x)=\frac{Q-p}{[[x]]_\lambda^p}.
$$
If we choose 
$$
h(x)=\frac{1}{[[x]]_\lambda^p}\left(\frac{\sigma_1 x^{(1)}}{\lambda_1(x)},\dots,\frac{\sigma_k x^{(k)}}{\lambda_k(x)} \right),
$$ 
and since $\lambda_i$ does not depend on $x^{(i)}$ we obtain
$$
\textnormal{div}_\lambda h(x)=\frac{Q}{[[x]]_\lambda^p}-p\frac{1}{[[x]]_\lambda^{p+1}}\sum_{i=1}^k\sigma_ix^{(i)}\cdot \triangledown_{x^{(i)}}(\|x\|_\lambda).
$$
Consequently, the homogeneous norm $[[\cdot]]_\lambda$ should fulfill the relation 
\begin{align}\label{p1}
\sum_{i=1}^k\sigma_ix^{(i)}\cdot\triangledown_{x^{(i)}}([[x]]_\lambda)=[[x]]_\lambda.
\end{align}
On the other hand, computing the norm of $h$ we obtain
$$
|h(x)|^2=\frac{1}{[[x]]_\lambda^{2p}}\frac{1}{\prod_{i=1}^k\lambda_i(x)^2}
\left(\prod_{j\neq 1}\lambda_j(x)^2 \sigma_1^2 |x^{(1)}|^2 + \dots  +  \prod_{j\neq k}\lambda_j(x)^2 \sigma_k^2 |x^{(k)}|^2
 \right),
$$
which motivates to consider the homogeneous norm
\begin{align}\label{p2}
[[x]]_\lambda=
\left(\prod_{j\neq 1}\lambda_j(x)^2 \sigma_1^2 |x^{(1)}|^2 + \dots  +  \prod_{j\neq k}\lambda_j(x)^2 \sigma_k^2|x^{(k)}|^2
 \right)^{\frac{1}{2(1+\sum_{i=1}^k(\sigma_i-1))}}.
\end{align}
The exponent is determined by requiring  $[[\cdot]]_\lambda$ to be $\delta_r$-homogeneous of degree one.
Since the functions $\lambda_i$ are of the form \eqref{lam},
%$$
%\lambda_i(x)=\prod_{j=1}^{i-1}|x_j|^{\alpha_{ji}},
%$$
the relation \eqref{p1} is satisfied.

\section{Hardy Inequalities for $\Delta_\lambda$-Laplacians}\label{sec_hardy}

\subsection{Our homogeneous norms}\label{sec_semi}

We recall that
$
\Delta_\lambda=\lambda_1^2\Delta_{x^{(1)}}+\cdots +\lambda_k^2\Delta_{x^{(k)}}
$
with functions $\lambda_i$  of the form %\eqref{lam}, i.e.,
\begin{align*}
\lambda_i(x)=\prod_{j=1}^{k}|x^{(j)}|^{\alpha_{ij}}, \qquad i=1,\dots,k,
\end{align*}
which are 
%here  the functions $\lambda_i$ are pairwise different. By assumption the  $\lambda_i$'s are 
$\delta_r$-homogeneous of degree $\sigma_i-1$ 
%$$
%\lambda_i(\delta_r(x))=  r^{\sigma_1-1}\lambda_i(x), \qquad x\in\R^N,
%$$ 
with respect to a group of dilations 
$$
\delta_r(x)=(r^{\sigma_1}x^{(1)},\dots, r^{\sigma_k}x^{(k)}),\qquad x\in\R^N,\  r>0.
$$
%and the homogeneous dimension is $Q=\sigma_1N_1+\cdots+\sigma_kN_k.$
%\\
Using our previous notations  follow the relations 
\begin{align*}
\sum_{j=1}^k\alpha_{ij}\sigma_j&=\sigma_i-1,&
%\lambda_i(x) &=\prod_{j=1}^k|x^{(j)}|^{\alpha_{ij}},\qquad
\prod_{i=1}^k\lambda_i(x) &=\prod_{j=1}^k|x^{(j)}|^{\sum_{i=1}^k\alpha_{ij}}.
\end{align*}

\begin{definition}

We define the homogenous norm $[[\cdot]]_\lambda$ associated to the $\Delta_\lambda$-Laplacian by relation \eqref{p2}, 
\begin{align*}
[[x]]_\lambda:=\left(\prod_{i \neq 1}\lambda_i(x)^2\sigma_1^2|x^{(1)}|^2+\cdots+ \prod_{i\neq k}\lambda_i(x)^2\sigma_k^2|x^{(k)}|^2\right)^{\frac{1}{2(1+\sum_{i=1}^k(\sigma_i-1))}},\qquad x\in\R^N.
\end{align*}
\end{definition}

Under our hypotheses $[[\cdot]]_\lambda$  can be written as
\begin{align*}
[[x]]_\lambda&=\left(\prod_{j= 1}^k   |x^{(j)}|^{\sum_{i\neq 1}2\alpha_{ij}}  \sigma_1^2|x^{(1)}|^2+\cdots+ 
\prod_{j= 1}^k   |x^{(j)}|^{\sum_{i\neq k}2\alpha_{ij}} \sigma_k^2|x^{(k)}|^2\right)^{\frac{1}{2(1+\sum_{i=1}^k(\sigma_i-1))}}.
\end{align*}

We compute the homogeneous norm $[[\cdot]]_\lambda$ for some of the operators in our previous examples.
\begin{itemize}

\item For Grushin-type operators 
$$
\Delta_\lambda=\Delta_x+|x|^{2\alpha}\Delta_y, \qquad
 (x,y)\in\R^{N_1}\times\R^{N_2},
$$ 
where the constant $\alpha$ is non-negative, the definition leads to 
the same distance from the origin that D'Ambrosio considered in \cite{DAm},
$$
[[(x,y)]]_\lambda=\left(|x|^{2(1+\alpha)}+(1+\alpha)^2 |y|^2\right)^{\frac{1}{2(1+\alpha)}}.
$$

\item For operators of the form 
$$
\Delta_\lambda=\Delta_x+|x|^{2\alpha}\Delta_y+|x|^{2\beta}\Delta_z,\qquad (x,y,z)\in\R^{N_1}\times\R^{N_2}\times\R^{N_3},
$$ 
with non-negative constants $\alpha$ and $\beta,$ we obtain 
$$
[[(x,y,z)]]_\lambda=\left(|x|^{2(1+\alpha+\beta)}+(1+\alpha)^2|x|^{2\beta} |y|^2
+(1+\beta)^2|x|^{2\alpha} |z|^2\right)^{\frac{1}{2(1+\alpha+\beta)}}.
$$

\item For $\Delta_\lambda$-Laplacians of the form 
 $$
 \Delta_\lambda=\Delta_x+|x|^{2\alpha}\Delta_y+|x|^{2\beta}|y|^{2\gamma}\Delta_z,\qquad
 (x,y,z)\in\R^{N_1}\times\R^{N_2}\times\R^{N_3},
 $$ 
 where the constants $\alpha, \beta$ and $\gamma$ are non-negative,
 we get
\begin{align*} 
[[(x,y,z)]]_\lambda 
= \left(|y|^{2\gamma}|x|^{2(1+\alpha+\beta)}+(1+\alpha)^2|x|^{2\beta}|y|^{2(1+\gamma)}
+(1+\mu)^2|x|^{2\alpha} |z|^2\right)^{\frac{1}{2(1+\alpha+\mu)}},
\end{align*}
where $\mu=\beta+(1+\alpha)\gamma.$

\end{itemize}

\begin{proposition}\label{prop}
 Our homogeneous norm $[[\cdot]]_\lambda$ satisfies the following properties:
 
 \begin{itemize}

\item[(1)] 
It is $\delta_r$-homogeneous of degree one, i.e.,
$$
[[\delta_r(x)]]_\lambda=r[[x]]_\lambda.
$$

\item[(2)]
It fulfills the relation 
$$
\sum_{i=1}^k\sigma_i\left(x^{(i)}\cdot \triangledown_{x^{(i)}}\right)[[x]]_\lambda=[[x]]_\lambda.
$$

\end{itemize}

\end{proposition}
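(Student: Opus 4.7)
The plan is to establish (1) by a direct exponent-count substitution, and then to deduce (2) from (1) as a consequence of Euler's identity for $\delta_r$-homogeneous functions. Only (1) requires any real computation; (2) is essentially automatic once (1) is known.

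For (1), I would substitute $\delta_r(x)$ into the explicit definition of $[[\cdot]]_\lambda$ and track how each factor scales. Since $|x^{(j)}| \mapsto r^{\sigma_j} |x^{(j)}|$ and $\lambda_j(x) \mapsto r^{\sigma_j - 1}\lambda_j(x)$, the $i$-th summand $\prod_{j \neq i}\lambda_j(x)^2 \sigma_i^2 |x^{(i)}|^2$ acquires the factor
\[
\prod_{j \neq i} r^{2(\sigma_j - 1)} \cdot r^{2\sigma_i} \;=\; r^{\,2\sum_{j\neq i}(\sigma_j - 1) + 2\sigma_i} \;=\; r^{\,2(1 + \sum_{j=1}^k(\sigma_j - 1))},
\]
where the last equality comes from rewriting $2\sigma_i = 2(\sigma_i - 1) + 2$ and then completing the sum over $j$. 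The key point is that this scaling factor is \emph{independent of $i$}, so it can be pulled out of the whole sum. Taking the defining exponent $\tfrac{1}{2(1+\sum_i(\sigma_i - 1))}$-th power then yields $[[\delta_r(x)]]_\lambda = r [[x]]_\lambda$.

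For (2), I would simply differentiate the identity $[[\delta_r(x)]]_\lambda = r [[x]]_\lambda$ in $r$ and evaluate at $r = 1$. By the chain rule,
\[
\frac{d}{dr}\bigg|_{r=1} [[\delta_r(x)]]_\lambda
= \sum_{i=1}^k \sum_{\ell=1}^{N_i} \sigma_i \, x_\ell^{(i)} \, \partial_{x_\ell^{(i)}}[[x]]_\lambda
= \sum_{i=1}^k \sigma_i \bigl(x^{(i)} \cdot \nabla_{x^{(i)}}\bigr)[[x]]_\lambda,
\]
while the right-hand side differentiates to $[[x]]_\lambda$. This gives (2).

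The only delicate point is making sure the differentiation in the Euler step is legitimate, i.e., that $[[\cdot]]_\lambda$ is smooth at the point $x$ where (2) is evaluated. Off the degeneracy set $\{\lambda_i = 0\text{ for some } i\} \cup \{x = 0\}$ the expression inside the outer exponent is strictly positive and smooth in $x$, so no issue arises; on this degeneracy set one can either interpret (2) almost-everywhere or verify it by continuity from the regular set, since both sides of (2) extend continuously across these points. This is the main (and really only) technical obstacle; the homogeneity bookkeeping in step 1 is routine.
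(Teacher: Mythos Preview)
Your proof is correct. For (1) you follow essentially the same direct substitution and exponent count as the paper. For (2), however, you take a genuinely different route: the paper computes $x^{(l)}\cdot\nabla_{x^{(l)}}[[x]]_\lambda$ explicitly term by term and then sums over $l$, invoking the structural relation $\sum_{l=1}^k\sigma_l\alpha_{jl}=\sigma_j-1$ to collapse the result to $[[x]]_\lambda$. Your Euler-identity argument bypasses this entirely by differentiating the homogeneity relation (1) at $r=1$; it is shorter and uses nothing about the $\lambda_i$'s beyond the homogeneity already established in (1). On the other hand, the paper's direct computation makes the algebraic mechanism explicit and, more importantly, serves as a template that is reused verbatim in the proof of Theorem~\ref{thm_semi}, where the analogous derivative $x^{(l)}\cdot\nabla_{x^{(l)}}[[x]]_{\varepsilon,\lambda}$ of the $\varepsilon$-regularized norm must be computed and bounded to control $c_\varepsilon(x)$. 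So your approach is cleaner for the proposition in isolation, while the paper's approach pays dividends downstream.
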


\begin{proof}
$(1)$\ 
Let $x\in\R^N.$ The homogeneity of the functions $\lambda_i$ implies that
\begin{align*}
&\  [[\delta_r(x)]]_\lambda \\
=&\ \left(\prod_{i \neq 1}(\lambda_i(\delta_r(x)))^2\sigma_1^2|r^{\sigma_1}x^{(1)}|^2+\cdots+ 
\prod_{i\neq k}(\lambda_i(\delta_r(x)))^2\sigma_k^2|r^{\sigma_k}x^{(k)}|^2\right)^{\frac{1}{2(1+\sum_{i=1}^k(\sigma_i-1))}}\\
=&\ \left(\prod_{i \neq 1}r^{2\sigma_1}r^{2(\sigma_i-1)}(\lambda_i(x))^2\sigma_1^2|x^{(1)}|^2+\cdots+ 
\prod_{i\neq k}r^{2\sigma_k}r^{2(\sigma_i-1)}(\lambda_i(x))^2\sigma_k^2|x^{(k)}|^2\right)^{\frac{1}{2(1+\sum_{i=1}^k(\sigma_i-1))}}\\
=&\ \left(r^{2+\sum_{i=1}^k2(\sigma_i-1)}\prod_{i \neq 1}(\lambda_i(x))^2\sigma_1^2|x^{(1)}|^2+\cdots+ \prod_{i\neq k}(\lambda_i(x))^2\sigma_k^2|x^{(k)}|^2\right)^{\frac{1}{2(1+\sum_{i=1}^k(\sigma_i-1))}}
= r[[x]]_\lambda.
\end{align*}
\\
$(2)$\ 
We observe 
\begin{align*}
&\ x^{(l)}\cdot \triangledown_{x^{(l)}}[[x]]_\lambda\\
=&\ \frac{1}{2(1+\sum_{i=1}^k(\sigma_i-1))}\left(\prod_{i \neq 1}(\lambda_i(x))^2\sigma_1^2|x^{(1)}|^2+\cdots+ \prod_{i\neq k}(\lambda_i(x))^2\sigma_k^2|x^{(k)}|^2\right)^{\frac{1}{2(1+\sum_{i=1}^k(\sigma_i-1))}-1}\\
&\ \left( (2\sum_{j\neq1}\alpha_{jl})\prod_{i \neq 1}(\lambda_i(x))^2\sigma_1^2|x^{(1)}|^2+\cdots+ (2\sum_{j\neq k}\alpha_{jl}) \prod_{i\neq k}(\lambda_i(x))^2\sigma_k^2|x^{(k)}|^2+
2\prod_{i \neq l}(\lambda_i(x))^2\sigma_l^2|x^{(l)}|^2 \right),
\end{align*}
and using the relation $\sum_{l=1}^k\sigma_l\alpha_{jl}=\sigma_j-1$ it follows that 
\begin{align*}
&\ \sum_{l=1}^k\sigma_l\left(x^{(l)}\cdot \triangledown_{x^{(l)}}\right)[[x]]_\lambda\\
=&\  \frac{1}{2(1+\sum_{i=1}^k(\sigma_i-1))}\left(\prod_{i \neq 1}(\lambda_i(x))^2\sigma_1^2|x^{(1)}|^2+\cdots+ \prod_{i\neq k}(\lambda_i(x))^2\sigma_k^2|x^{(k)}|^2\right)^{\frac{1}{2(1+\sum_{i=1}^k(\sigma_i-1))}-1}\\
&\ 2\left( ((\sum_{j\neq1}\sum_{l=1}^k\sigma_l\alpha_{jl})+\sigma_1)\prod_{i \neq 1}(\lambda_i(x))^2\sigma_1^2|x^{(1)}|^2+\cdots+ ((\sum_{j\neq k}\sum_{l=1}^k\sigma_l\alpha_{jl})+\sigma_k) \prod_{i\neq k}(\lambda_i(x))^2\sigma_k^2|x^{(k)}|^2 \right) \\
=&\ [[x]]_\lambda.
\end{align*}
\end{proof}

\subsection{Main results}

We denote by $\mathring{W}_\lambda^{1,p}(\Omega)$ 
the closure of $C_0^1(\Omega)$ with respect to the norm
$$
\|u\|_{\mathring{W}_\lambda^{1,p}(\Omega)}:=\left(\int_\Omega|\triangledown_\lambda u(x)|^p dx\right)^{\frac{1}{p}},
$$
 and for $\psi\in L^1_{loc}(\Omega)$ such that $\psi>0$ a.e. in $\Omega$ 
we define the space $\mathring{W}_\lambda^{1,p}(\Omega,\psi)$ as
 the closure of $C_0^1(\Omega)$ with respect to the norm 
$$
\|u\|_{\mathring{W}_\lambda^{1,p}(\Omega;\psi)}:=\left(\int_\Omega|\triangledown_\lambda u(x)|^p \psi(x) dx\right)^{\frac{1}{p}}.
$$

\begin{theorem}\label{thm_semi}
Let $p>1$ and $\mu_1,\dots,\mu_k,\ s\in\R$ be such that
$s<N_1+\mu_1$ and 
\begin{align}\label{cond1}
-p\min\{\alpha_{1i},\dots\alpha_{ki},1 \} +s<N_i+\mu_i \qquad i=1,\dots,k.
\end{align}
Then, for every $u\in \mathring{W}_\lambda^{1,p}(\Omega, \psi),$ we have 
$$
\left(\frac{Q-s+\sum_{i=1}^k\sigma_i\mu_i}{p}\right)^p\int_\Omega\frac{\prod_{i=1}^k|x^{(i)}|^{\mu_i}}{[[x]]_\lambda^s}|u(x)|^p\ dx\leq
\int_\Omega\psi(x)\left|\triangledown_\lambda u(x)\right|^p\ dx,
$$
where 
$ \psi(x)=\frac{[[x]]_\lambda^{p(1+\sum_{i=1}^k(\sigma_i-1))-s}}{\prod_{i=1}^k  |x^{(i)}|^{p(\sum_{j=1}^k\alpha_{ji})- \mu_i} }.$ 

In particular, for $s=p$ and $\mu_1=\cdots=\mu_k=0$ we get
$$
\left(\frac{Q-p}{p}\right)^p\int_\Omega\frac{|u(x)|^p}{[[x]]_\lambda^p}\ dx\leq
\int_\Omega\frac{[[x]]_\lambda^{\sum_{i=1}^k(\sigma_i-1)}}{\prod_{i=1}^k\lambda_i(x)^p}\left|\triangledown_\lambda u(x)\right|^p\ dx,
$$
and choosing $s=p(1+\sum_{i=1}^k(\sigma_i-1))$ and $\mu_i=p\sum_{j=1}^k\alpha_{ji}$ we obtain
$$
\left(\frac{Q-p}{p}\right)^p\int_\Omega\frac{\prod_{i=1}^k\lambda_i(x)^p}{[[x]]_\lambda^{p(1+\sum_{i=1}^k(\sigma_i-1))}}|u(x)|^p\ dx
\leq\int_\Omega \left|\triangledown_\lambda u(x)\right|^p\ dx.
$$
\end{theorem}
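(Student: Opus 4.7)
The plan is to apply Lemma \ref{lem0} with the vector field
$$
h(x) = \frac{\prod_{j=1}^k |x^{(j)}|^{\mu_j}}{[[x]]_\lambda^s}\left(\frac{\sigma_1 x^{(1)}}{\lambda_1(x)},\ldots,\frac{\sigma_k x^{(k)}}{\lambda_k(x)}\right),
$$
which is the natural generalization of the choice motivated in Section \ref{sec_approach}. Everything then reduces to evaluating $\textnormal{div}_\lambda h$ and $|h|^p/(\textnormal{div}_\lambda h)^{p-1}$, matching them with the weights appearing in the statement, and invoking the lemma.

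For the divergence, set $f(x) = \prod_{j=1}^k |x^{(j)}|^{\mu_j}/[[x]]_\lambda^s$. Since $\alpha_{ii}=0$, the factor $\lambda_i$ does not depend on $x^{(i)}$, so
$$
\lambda_i \textnormal{div}_{x^{(i)}} h^{(i)} = \sigma_i \textnormal{div}_{x^{(i)}}\bigl(f\, x^{(i)}\bigr) = \sigma_i\Bigl((N_i+\mu_i)\,f - \frac{s\,f}{[[x]]_\lambda}\,x^{(i)}\cdot\triangledown_{x^{(i)}}[[x]]_\lambda\Bigr).
$$
Summing over $i$ and applying Proposition \ref{prop}(2) to collapse the second term to $-sf$ gives
$$
\textnormal{div}_\lambda h = \Bigl(Q - s + \sum_{i=1}^k\sigma_i\mu_i\Bigr)\frac{\prod_{j=1}^k |x^{(j)}|^{\mu_j}}{[[x]]_\lambda^s},
$$
which matches the weight on the left-hand side. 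For the right-hand side, the identity $\sum_{i=1}^k \sigma_i^2|x^{(i)}|^2\prod_{j\neq i}\lambda_j^2 = [[x]]_\lambda^{2(1+\sum_i(\sigma_i-1))}$ built into the norm yields $|h| = (f/\prod_l\lambda_l)\,[[x]]_\lambda^{1+\sum_i(\sigma_i-1)}$, and together with $\prod_l\lambda_l = \prod_j|x^{(j)}|^{\sum_l\alpha_{lj}}$ a short algebraic manipulation identifies
$$
\frac{|h|^p}{(\textnormal{div}_\lambda h)^{p-1}} = \frac{\psi(x)}{\bigl(Q-s+\sum_{i=1}^k\sigma_i\mu_i\bigr)^{p-1}}.
$$
Substituting both expressions into Lemma \ref{lem0} and rearranging produces the asserted inequality; the two special cases $(s,\mu_i)=(p,0)$ and $(s,\mu_i)=(p(1+\sum(\sigma_i-1)),\,p\sum_j\alpha_{ji})$ follow by direct substitution.

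The main technical hurdle is that $h$ is not globally $C^1$: the factors $|x^{(j)}|^{\mu_j}$ are non-smooth on $\{x^{(j)}=0\}$ and the quotients $x^{(i)}/\lambda_i$ are singular there whenever some $\alpha_{ij}>0$, so Lemma \ref{lem0} does not apply verbatim. The hypotheses $s<N_1+\mu_1$ and $-p\min\{\alpha_{1i},\ldots,\alpha_{ki},1\}+s<N_i+\mu_i$ are exactly the conditions that make both $|u|^p\textnormal{div}_\lambda h$ and $\psi|\triangledown_\lambda u|^p$ locally integrable across the singular set. The natural remedy, parallel to the regularization used in Section \ref{sec_approach} for the Euclidean case, is to replace every occurrence of $|x^{(j)}|$ by $(|x^{(j)}|^2+\varepsilon)^{1/2}$ in $h$ and in $[[x]]_\lambda$, apply the lemma to the smoothed vector field, and pass to the limit $\varepsilon\downarrow 0$ by dominated convergence. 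Density of $C_0^1(\Omega)$ in $\mathring{W}_\lambda^{1,p}(\Omega,\psi)$ then extends the estimate to all $u$ in that space.
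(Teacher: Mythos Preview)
Your proposal is correct and follows essentially the same route as the paper: the same vector field $h$, the same $\varepsilon$-regularization (replacing $|x^{(j)}|$ by $(|x^{(j)}|^2+\varepsilon)^{1/2}$ inside $\lambda_i$ and $[[\cdot]]_\lambda$), application of Lemma~\ref{lem0}, and passage to the limit by dominated convergence.

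One small correction on the role of the hypothesis $s<N_1+\mu_1$: in the paper this is not used for integrability but to guarantee that the regularized divergence stays strictly positive. Concretely, after smoothing one has $\textnormal{div}_\lambda h_\varepsilon = c_\varepsilon(x)\,\prod_i|x^{(i)}|^{\mu_i}/[[x]]_{\varepsilon,\lambda}^s$ with a factor $c_\varepsilon(x)$ that is no longer the constant $Q-s+\sum_i\sigma_i\mu_i$ but only converges to it; the bound $c_\varepsilon(x)\geq N_1+\mu_1-s>0$ is what allows Lemma~\ref{lem0} to be invoked uniformly in $\varepsilon$ and what makes the dominated convergence argument go through. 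Condition~\eqref{cond1}, on the other hand, is indeed the local integrability condition for $\psi$. Apart from this relabeling of which hypothesis does what, your argument coincides with the paper's.
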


\begin{proof}
We deduce the inequalities from Lemma \ref{lem0}. To this end for $\varepsilon>0$ we 
define
\begin{align*}
\lambda^\varepsilon&:=(\lambda_1^\varepsilon,\dots,\lambda_k^\varepsilon),\qquad 
\lambda_i^\varepsilon(x):=\prod_{j=1}^k\left(|x^{(j)}|^2+\varepsilon\right)^{\frac{\alpha_{ij}}{2}}, \quad i=1,\dots, k,\\
[[x]]_{\varepsilon,\lambda}&:=\left(\sum_{j=1}^k\Big(\prod_{i\neq j}\lambda_i^\varepsilon(x)^2\sigma_j^2|x^{(j)}|^2\Big)\right)^{\frac{1}{2(1+\sum_{i=1}^k(\sigma_i-1))}}
\end{align*}
and consider the function
$$
h_\varepsilon(x):=\frac{\prod_{i=1}^k|x^{(i)}|^{\mu_i}}{[[x]]_{\varepsilon,\lambda}^s}
\left(\frac{\sigma_1x^{(1)}}{\lambda^\varepsilon_1(x)},\dots, \frac{\sigma_k x^{(k)}}{\lambda^\varepsilon_k(x)}\right).
$$
We obtain
\begin{align*}
\textnormal{div}_{\lambda} h_\varepsilon(x)=&\ 
\sum_{i=1}^k\frac{\lambda_i(x)}{\lambda_i^\varepsilon(x)}\triangledown_{x^{(i)}}\cdot \left(\frac{\prod_{i=1}^k|x^{(i)}|^{\mu_i}}{[[x]]_{\varepsilon,\lambda}^s} \sigma_i x^{(i)} \right)\\
=&\frac{\prod_{i=1}^k|x^{(i)}|^{\mu_i}}{[[x]]_{\varepsilon,\lambda}^s} 
\left( \sum_{i=1}^k\frac{\lambda_i(x)}{\lambda_i^\varepsilon(x)}\left(N_i\sigma_i+\sigma_i\mu_i-s \frac{1}{[[x]]_{\varepsilon,\lambda}}\sigma_i x^{(i)} \cdot\triangledown_{x^{(i)}}([[x]]_{\varepsilon,\lambda})\right)\right)\\
=&\frac{\prod_{i=1}^k|x^{(i)}|^{\mu_i}}{[[x]]_{\varepsilon,\lambda}^s} c_\varepsilon(x),
%=&\frac{\prod_{i=1}^k(|x^{(i)}|^2+\varepsilon)^{\frac{\mu_i}{2}}}{[[x]]_{\varepsilon,\lambda}^s} 
%\left(Q+ \sum_{i=1}^k\sigma_i\mu_i\frac{|x^{(i)}|^2}{|x^{(i)}|^2+\varepsilon}-s \frac{1}{[[x]]_{\varepsilon,\lambda}} \sum_{i=1}^k\sigma_i x^{(i)} \cdot\triangledown_{x^{(i)}}([[x]]_{\varepsilon,\lambda})\right),
\end{align*}
where 
$$
c_\varepsilon(x):=\sum_{i=1}^k\frac{\lambda_i(x)}{\lambda_i^\varepsilon(x)}\left(N_i\sigma_i+\sigma_i\mu_i-s \frac{1}{[[x]]_{\varepsilon,\lambda}}\sigma_i x^{(i)} \cdot\triangledown_{x^{(i)}}([[x]]_{\varepsilon,\lambda})\right).
$$
Using Proposition \ref{prop} we observe that
\begin{align*}
\lim_{\varepsilon\rightarrow 0} c_\varepsilon(x)=\sum_{i=1}^k\left(N_i\sigma_i+\sigma_i\mu_i-s \right)= Q-s+\sum_{i=1}^k\sigma_i\mu_i,
\end{align*}
which is positive by our hypothesis.
Moreover, there exist positive constants $\alpha_1$ and $\alpha_2$ such that
\begin{align}\label{bound}
0<\alpha_1\leq c_\varepsilon(x)\leq \alpha_2<\infty\qquad  \forall x\in\Omega.
\end{align}
Indeed, we compute
\begin{align*}
&\ x^{(l)}\cdot \triangledown_{x^{(l)}}[[x]]_{\varepsilon,\lambda}\\
=&\ \frac{1}{2(1+\sum_{i=1}^k(\sigma_i-1))}\left(\prod_{i \neq 1}(\lambda_i^\varepsilon(x))^2\sigma_1^2|x^{(1)}|^2+\cdots+ \prod_{i\neq k}(\lambda_i^\varepsilon(x))^2\sigma_k^2|x^{(k)}|^2\right)^{\frac{1}{2(1+\sum_{i=1}^k(\sigma_i-1))}-1}\\
=&\ \Big\{ \frac{|x^{(l)}|^2}{|x^{(l)}|^2+\varepsilon}\Big[(2\sum_{j\neq1}\alpha_{jl})\prod_{i \neq 1}(\lambda_i^\varepsilon(x))^2\sigma_1^2|x^{(1)}|^2+\cdots+ (2\sum_{j\neq k}\alpha_{jl}) \prod_{i\neq k}(\lambda_i^\varepsilon(x))^2\sigma_k^2|x^{(k)}|^2\Big]\\
& +2\prod_{i \neq l}(\lambda_i^\varepsilon(x))^2\sigma_l^2|x^{(l)}|^2 \Big\},
\end{align*}
and consequently, using the relation $\sum_{l=1}^k\sigma_l\alpha_{jl}=\sigma_j-1$ it follows that 
\begin{align*}
c_\varepsilon(x)=&\sum_{l=1}^k\frac{\lambda_l(x)}{\lambda_l^\varepsilon(x)}\left(N_l\sigma_l+\sigma_l\mu_l\right)
-s\frac{1}{[[x]]_{\varepsilon,\lambda}}\sum_{l=1}^k\frac{\lambda_l(x)}{\lambda_l^\varepsilon(x)} \sigma_l 
x^{(l)} \cdot\triangledown_{x^{(l)}}([[x]]_{\varepsilon,\lambda})\\
&\sum_{l=1}^k\frac{\lambda_l(x)}{\lambda_l^\varepsilon(x)}\left(N_l\sigma_l+\sigma_l\mu_l\right)
-s\frac{1}{[[x]]_{\varepsilon,\lambda}}[[x]]_{\varepsilon,\lambda}^{1-2(1+\sum_{i=1}^k(\sigma_i-1))}\sum_{l=1}^k \sigma_l 
x^{(l)} \cdot\triangledown_{x^{(l)}}([[x]]_{\varepsilon,\lambda})\\
\geq &\sum_{l=1}^k\frac{\lambda_l(x)}{\lambda_l^\varepsilon(x)}\left(N_l\sigma_l+\sigma_l\mu_l\right)-
s\frac{[[x]]_{\varepsilon,\lambda}^{-2(1+\sum_{i=1}^k(\sigma_i-1))}}{(1+\sum_{i=1}^k(\sigma_i-1))}\cdot  \\
&\ \Big\{ ((\sum_{j\neq1}\sum_{l=1}^k\sigma_l\alpha_{jl})+\sigma_1)\prod_{i \neq 1}(\lambda_i^\varepsilon(x))^2\sigma_1^2|x^{(1)}|^2+\cdots+ ((\sum_{j\neq k}\sum_{l=1}^k\sigma_l\alpha_{jl}) +\sigma_k)\prod_{i\neq k}(\lambda_i^\varepsilon(x))^2\sigma_k^2|x^{(k)}|^2  \Big\}\\
=&\sum_{l=1}^k\frac{\lambda_l(x)}{\lambda_l^\varepsilon(x)}\left(N_l\sigma_l+\sigma_l\mu_l\right)-
s\geq N_1+\mu_1-s>0.
\end{align*}
On the other hand,
\begin{align*}
c_\varepsilon(x)=&\sum_{l=1}^k\frac{\lambda_l(x)}{\lambda_l^\varepsilon(x)}\left(N_l\sigma_l+\sigma_l\mu_l-s \frac{1}{[[x]]_{\varepsilon,\lambda}}\sigma_l 
x^{(l)} \cdot\triangledown_{x^{(l)}}([[x]]_{\varepsilon,\lambda})\right)\\
\leq & \sum_{l=1}^k\left(N_l\sigma_l+\sigma_l\mu_l\right)<\infty,
\end{align*}
which concludes the proof of property \eqref{bound}.

Moreover, we compute 
\begin{align*}
| h_\varepsilon(x) | & =\ \frac{\prod_{i=1}^k|x^{(i)}|^{\mu_i}}{[[x]]_{\varepsilon,\lambda}^s}
\left(\sum_{i=1}^k\frac{\sigma_i^2|x^{(i)}|^2}{\lambda^\varepsilon_i(x)^2}\right)^{\frac{1}{2}}
\\
&=\ \frac{\prod_{i=1}^k|x^{(i)}|^{\mu_i}}{[[x]]_{\varepsilon,\lambda}^s}
\frac{\left( \sum_{i=1}^k  \prod_{j\neq i} \lambda^\varepsilon_j(x)^2 \sigma_i^2|x^{(i)}|^2   \right)^{\frac{1}{2}}}{\prod_{i=1}^k \lambda^\varepsilon_i(x)}\\
&=
\frac{\prod_{i=1}^k|x^{(i)}|^{\mu_i}[[x]]_{\varepsilon,\lambda}^{(1+\sum_{i=1}^k(\sigma_i-1))-s} 
 }{\prod_{i=1}^k \lambda^\varepsilon_i(x)},
\end{align*}
and Lemma \ref{lem0} applied to $h_\varepsilon$ yields
\begin{align*} 
&\frac{1}{p^p}\int_\Omega c_\varepsilon(x) \frac{\prod_{i=1}^k|x^{(i)}|^{\mu_i}}{[[x]]_{\varepsilon,\lambda}^s} |u(x)|^p dx\\
\leq &
\ \int_\Omega \frac{1}{c_\varepsilon(x)^{(p-1)}}\frac{\prod_{i=1}^k|x^{(i)}|^{\mu_i}}{[[x]]_{\varepsilon,\lambda}^{s}} 
\left(\sum_{i=1}^k\frac{\sigma_i^2|x^{(i)}|^2}{\lambda^\varepsilon_i(x)^2}\right)^{\frac{p}{2}}|\triangledown_{\lambda} u(x)|^p\ dx,\\
\leq &
\ \frac{1}{\alpha_1^{(p-1)}}\int_\Omega \frac{\prod_{i=1}^k|x^{(i)}|^{\mu_i}}{[[x]]_{\lambda}^{s}} 
\left(\sum_{i=1}^k\frac{\sigma_i^2|x^{(i)}|^2}{\lambda_i(x)^2}\right)^{\frac{p}{2}}|\triangledown_{\lambda} u(x)|^p\ dx\\
=&\ \frac{1}{\alpha_1^{(p-1)}}\int_\Omega\psi(x)|\triangledown_{\lambda} u(x)|^p\ dx.
\end{align*}
Since 
$$
\lim_{\varepsilon\rightarrow0}c_{\varepsilon}(x)=Q+\sum_{i=1}^k\sigma_i\mu_i-s,
$$
the theorem now follows
from the dominated convergence theorem by taking the limit $\varepsilon$ tends to zero.
\end{proof}

\begin{remark}
The first condition on the exponents in Theorem 
\ref{thm_semi} allows to derive the uniform estimates for $c_\epsilon(x)$ in the proof, while the condition
\eqref{cond1} ensures that 
$\psi$ belongs to $L^1_{loc}(\Omega).$
\end{remark}

We formulated a very general family of Hardy-type inequalities, the parameters allow to adjust the weights and to move 
them from one side of the inequality to the other. Particular choices lead to inequalities of the form \eqref{hardy1} or \eqref{hardy2}.

\begin{remark}
For Grushin-type operators  $\Delta_\lambda=\Delta_x+|x|^{2\alpha}\Delta_y,$ 
$\alpha\geq 0, (x,y)\in\R^{N_1}\times\R^{N_2},$ we recover the Hardy inequalities of Theorem 3.1 in \cite{DAm}, 
where it was proved that the constants are optimal. 
\end{remark}

For the convenience of the reader we first
formulated Hardy type inequalities for the particular case of our homogeneous norms $[[\cdot]]_\lambda.$
We now generalize Theorem \ref{thm_semi} 
and consider homogeneous distances from the origin $\|\cdot \|_\lambda$
that satisfy the relation 
$$
\sum_{j=1}^k\sigma_j\left( x^{(j)}\cdot\triangledown_{x^{(j)}} \right) \|x\|_{\lambda} = \|x\|_{\lambda},\qquad x\in\R^N.
$$
For instance, we could choose
\begin{align}
&&\|x\|_{\lambda}&:=\left(\sum_{j=1}^k |x^{(j)}|^{2\prod_{i\neq j}\sigma_i}\right)^{\frac{1}{2\prod_{i=1}^k\sigma_i}},&& x\in\R^N,\label{dist1}
\\
\textnormal{or}&&\|x\|_{\lambda}&:=\left(\sum_{j=1}^k(\sigma_j |x^{(j)}|)^{2\prod_{i\neq j}\sigma_i}\right)^{\frac{1}{2\prod_{i=1}^k\sigma_i}},&& x\in\R^N.
\label{dist2}
%\\
%\|x\|_{3,\lambda}&:=\left(\sum_{j=1}^k(\prod_{i\neq j}\sigma_i |x^{(j)}|)^{2\prod_{i\neq j}\sigma_i}\right)^{\frac{1}{2\prod_{i=1}^k\sigma_i}},&& x\in\R^N.
\end{align}

\begin{remark}
For Grushin-type operators the second distance $\|\cdot\|_{\lambda}$ coincides with our homogeneous norm $[[\cdot]]_\lambda$ and with the distance considered 
by D'Ambrosio in \cite{DAm}.
\end{remark}

We compute the first of the homogeneous distances for our previous examples.
\begin{itemize}

\item For operators of the form 
$$
\Delta_\lambda=\Delta_x+|x|^{2\alpha}\Delta_y+|x|^{2\beta}\Delta_z,\qquad (x,y,z)\in\R^{N_1}\times\R^{N_2}\times\R^{N_3},
$$ 
with non-negative constants $\alpha$ and $\beta,$ we obtain 
$$
\|(x,y,z)\|_\lambda=\left(|x|^{2(1+\alpha)(1+\beta)}+ |y|^{2(1+\beta)}
+|z|^{2(1+\alpha)}\right)^{\frac{1}{2(1+\alpha)(1+\beta)}}.
$$

\item For $\Delta_\lambda$-Laplacians of the form 
 $$
 \Delta_\lambda=\Delta_x+|x|^{2\alpha}\Delta_y+|x|^{2\beta}|y|^{2\gamma}\Delta_z,\qquad
 (x,y,z)\in\R^{N_1}\times\R^{N_2}\times\R^{N_3},
 $$ 
 where the constants $\alpha, \beta$ and $\gamma$ are non-negative,
 we get
\begin{align*} 
\|(x,y,z)\|_\lambda 
= \left(|x|^{2(1+\alpha)(1+\mu)}+|y|^{2(1+\mu)}
+|z|^{2(1+\alpha)}\right)^{\frac{1}{2(1+\alpha)(1+\mu)}},
\end{align*}
where $\mu=\beta+(1+\alpha)\gamma.$

\end{itemize}

\begin{theorem}\label{thm_dist}
Let $p>1$ and $\mu_1,\dots,\mu_k, s,t\in\R$ be such that  
$s+t<N_1+\mu_1$ and
\begin{align}\label{cond2}
-p\min\{\alpha_{1i},\dots\alpha_{ki},1 \} +s+\frac{t}{\sigma_i}<N_i+\mu_i \qquad i=1,\dots,k.
\end{align}

Then, for every $u\in\mathring{W}_\lambda^{1,p}(\Omega,\psi)$ we have 
\begin{align*}
\left(\frac{Q-s-t+\sum_{i=1}^k\sigma_i\mu_i}{p}\right)^p\int_\Omega\frac{\prod_{i=1}^k|x^{(i)}|^{\mu_i}}{\|x\|_{\lambda}^t[[x]]_\lambda^s}|u(x)|^p\ dx
& \leq\ \int_\Omega
\psi(x)\left|\triangledown_\lambda u(x)\right|^p\ dx,
\end{align*}
where $\psi(x)=\frac{\prod_{i=1}^k|x^{(i)}|^{ \mu_i-p\sum_{j=1}^k\alpha_{ji}}}{\|x\|_{\lambda}^t [[x]]_\lambda^{s-p(1+\sum_{i=1}^k(\sigma_i-1))}},$
and $\|\cdot\|_{\lambda}$ denotes the homogeneous norm \eqref{dist1} or \eqref{dist2}.

In particular, for $s=0,$ $\mu_i=0$ and $t=p$ we obtain
\begin{align*}
\left(\frac{Q-p}{p}\right)^p\int_\Omega\frac{|u(x)|^p}{\|x\|_{\lambda}^p}\ dx&\leq
%\int_\Omega\frac{1}{\|x\|_{\lambda}^p}\left( \sum_{j=1}^k\left(\frac{\sigma_j |x^{(j)}|}{\lambda_j(x)} \right)^2 \right)^{\frac{p}{2}}
%\left|\triangledown_\lambda u(x)\right|^p\ dx\\
%&=
\int_\Omega\frac{[[x]]_\lambda^{p(1+\sum_{i=1}^k(\sigma_i-1))}}{\|x\|_{\lambda}^p\prod_{j=1}^k\lambda_j(x)^p}
\left|\triangledown_\lambda u(x)\right|^p\ dx.
\end{align*}
%where  $\psi(x)=\frac{1}{\|x\|_{\lambda}^p}\left( \sum_{j=1}^k\left(\frac{\sigma_j |x^{(j)}|}{\lambda_j(x)} \right)^2 \right)^{\frac{p}{2}}.$ 

For $t=0$ we recover the Hardy inequalities in Theorem \ref{thm_semi} with our homogeneous norms $[[\cdot]]_\lambda.$
\end{theorem}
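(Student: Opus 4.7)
The plan is to mimic the proof of Theorem \ref{thm_semi}, now inserting the additional factor $\|x\|_\lambda^{-t}$ into the test vector field. For $\varepsilon>0$ I would introduce a regularization $\|x\|_{\varepsilon,\lambda}$ of $\|\cdot\|_\lambda$ by replacing each occurrence of $|x^{(j)}|^{2}$ inside it by $|x^{(j)}|^{2}+\varepsilon$, keep $[[x]]_{\varepsilon,\lambda}$ and $\lambda_i^\varepsilon$ exactly as in the proof of Theorem \ref{thm_semi}, and apply Lemma \ref{lem0} to
$$
h_\varepsilon(x):=\frac{\prod_{i=1}^{k}|x^{(i)}|^{\mu_i}}{\|x\|_{\varepsilon,\lambda}^{t}\,[[x]]_{\varepsilon,\lambda}^{s}}\left(\frac{\sigma_1 x^{(1)}}{\lambda_1^\varepsilon(x)},\dots,\frac{\sigma_k x^{(k)}}{\lambda_k^\varepsilon(x)}\right).
$$

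Differentiation via Leibniz then gives $\textnormal{div}_\lambda h_\varepsilon(x)=\frac{\prod_{i=1}^{k}|x^{(i)}|^{\mu_i}}{\|x\|_{\varepsilon,\lambda}^{t}[[x]]_{\varepsilon,\lambda}^{s}}\,c_\varepsilon(x)$ with
$$
c_\varepsilon(x)=\sum_{l=1}^{k}\frac{\lambda_l(x)}{\lambda_l^\varepsilon(x)}\left(N_l\sigma_l+\sigma_l\mu_l-s\,\frac{\sigma_l x^{(l)}\cdot\triangledown_{x^{(l)}}[[x]]_{\varepsilon,\lambda}}{[[x]]_{\varepsilon,\lambda}}-t\,\frac{\sigma_l x^{(l)}\cdot\triangledown_{x^{(l)}}\|x\|_{\varepsilon,\lambda}}{\|x\|_{\varepsilon,\lambda}}\right).
$$
Both $[[\cdot]]_\lambda$ and $\|\cdot\|_\lambda$ (in form \eqref{dist1} or \eqref{dist2}) satisfy the Euler-type identity $\sum_j\sigma_j(x^{(j)}\cdot\triangledown_{x^{(j)}})(\cdot)=(\cdot)$, the first by Proposition \ref{prop} and the second by a short direct differentiation analogous to its proof. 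Hence $c_\varepsilon(x)\to Q+\sum_i\sigma_i\mu_i-s-t$ pointwise as $\varepsilon\to 0$, which is positive by the standing hypothesis. A computation identical to the one in Theorem \ref{thm_semi} (the $\|x\|_{\varepsilon,\lambda}^{-t}$ factor is a mere bystander there) furnishes
$$
|h_\varepsilon(x)|=\frac{\prod_{i=1}^{k}|x^{(i)}|^{\mu_i}\,[[x]]_{\varepsilon,\lambda}^{1+\sum_{i=1}^{k}(\sigma_i-1)-s}}{\|x\|_{\varepsilon,\lambda}^{t}\prod_{i=1}^{k}\lambda_i^\varepsilon(x)}.
$$

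The crucial step is the uniform two-sided bound $0<\alpha_1\leq c_\varepsilon(x)\leq\alpha_2<\infty$ on $\Omega$. The upper bound follows by dropping the two negative terms. For the lower bound I repeat the regrouping used in Theorem \ref{thm_semi}: the contribution of the $[[\cdot]]_{\varepsilon,\lambda}$-derivative simplifies, via $\sum_{l=1}^{k}\sigma_l\alpha_{jl}=\sigma_j-1$, to exactly $-s$; an analogous elementary estimate, using $\sum_{l=1}^{k}\sigma_l x^{(l)}\cdot\triangledown_{x^{(l)}}\|x\|_{\varepsilon,\lambda}\leq\|x\|_{\varepsilon,\lambda}$, shows the $\|\cdot\|_{\varepsilon,\lambda}$-contribution is at worst $-t$. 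The $\lambda_l/\lambda_l^\varepsilon$ prefactors disappear on those negative terms, and isolating the $l=1$ summand of the positive part (where $\lambda_1\equiv 1$, $\sigma_1=1$) leaves $c_\varepsilon(x)\geq N_1+\mu_1-s-t>0$, precisely the first inequality assumed in the statement.

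It then suffices to substitute into Lemma \ref{lem0}, bound $c_\varepsilon^{-(p-1)}\leq\alpha_1^{-(p-1)}$ on the right-hand side, and let $\varepsilon\to 0$ by dominated convergence; condition \eqref{cond2} is exactly what guarantees the local integrability needed to pass to the limit, and yields the announced weight $\psi$. The special choices $s=0,\ \mu_i=0,\ t=p$ and $t=0$ then produce the two particular cases. The main obstacle I anticipate is the uniform lower bound on $c_\varepsilon$: the $\|\cdot\|_\lambda$-derivative enters with an extra $1/\sigma_l$ factor (visible from $\triangledown_{x^{(l)}}\|x\|_\lambda$ in either \eqref{dist1} or \eqref{dist2}), and tracking this factor carefully is what distinguishes \eqref{cond2} from \eqref{cond1} by the additional $t/\sigma_i$ term.
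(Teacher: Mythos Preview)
Your proposal is correct and follows essentially the same route as the paper: the paper uses the identical vector field $h_\varepsilon$, separates your $c_\varepsilon$ into the old $c_\varepsilon$ from Theorem~\ref{thm_semi} plus a new piece $\eta_\varepsilon$ coming from $\|\cdot\|_{\varepsilon,\lambda}$, shows $0\le\eta_\varepsilon\le t$ so that $c_\varepsilon-\eta_\varepsilon\ge N_1+\mu_1-s-t>0$, and then applies Lemma~\ref{lem0} and dominated convergence exactly as you describe. One small clarification: the hypothesis $s+t<N_1+\mu_1$ is what drives the uniform lower bound, while condition~\eqref{cond2} (with its $t/\sigma_i$ term) is used only to ensure $\psi\in L^1_{\mathrm{loc}}(\Omega)$, so your closing remark conflates the roles of the two assumptions.
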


\begin{proof} 
We prove the statement for the homogeneous norm \eqref{dist1}. The result for the
distance \eqref{dist2} follows analogously.
We deduce the inequalities from Lemma \ref{lem0}. To this end we define the function
$$
h_\varepsilon(x):=\frac{\prod_{i=1}^k|x^{(i)}|^{\mu_i}}{\|x\|_{\varepsilon,\lambda}^t [[x]]_{\varepsilon,\lambda}^s}
\left(\frac{\sigma_1x^{(1)}}{\lambda_1^\varepsilon(x)},\dots, \frac{\sigma_kx^{(k)}}{\lambda_k^\varepsilon(x)}\right),
$$
where $||\cdot||_{\varepsilon,\lambda}$ is a smooth approximation of $||\cdot||_\lambda,$ 
\begin{align*}
||x||_{\varepsilon,\lambda}&=\left(\sum_{j=1}^k (|x^{(j)}|^2+\varepsilon)^{\prod_{i\neq j}\sigma_i}\right)^{\frac{1}{2\prod_{i=1}^k\sigma_i}}.
%\qquad\textnormal{or}\qquad
%||x||_{\varepsilon,\lambda}&=\left(\sum_{j=1}^k (\sigma_j^2|x^{(j)}|^2+\varepsilon)^{\prod_{i\neq j}\sigma_i}\right)^{\frac{1}{2\prod_{i=1}^k\sigma_i}}
\end{align*}
%Using the relation
%$$
%\sum_{i=1}^k\sigma_i\left((x^{(i)}+\varepsilon)\cdot\triangledown_{x^{(i)}}\right)\|x+\varepsilon\|_{\lambda}=\|x+\varepsilon\|_{\lambda}.
%$$
We obtain
\begin{align*}
\left| h_\varepsilon(x)\right|&=\frac{\prod_{i=1}^k|x^{(i)}|^{\mu_i}\ [[x]]_{\varepsilon,\lambda}^{(1+\sum_{i=1}^k(\sigma_i-1))-s} 
 }{\prod_{i=1}^k \lambda^\varepsilon_i(x)\ \|x\|_{\varepsilon,\lambda}^t},\\
\textnormal{div}_{\lambda} h_\varepsilon(x)&=
\frac{\prod_{i=1}^k|x^{(i)}|^{\mu_i}}{\|x\|_{\varepsilon,\lambda}^t [[x]]_{\varepsilon,\lambda}^s}
\bigg(c_\varepsilon(x)-t \frac{1}{\|x\|_{\varepsilon,\lambda}} \sum_{i=1}^k\frac{\lambda_i(x)}{\lambda_i^\varepsilon(x)}\sigma_i x^{(i)} \cdot\triangledown_{x^{(i)}}(\|x\|_{\varepsilon,\lambda} )\bigg)\\
&=\frac{\prod_{i=1}^k|x^{(i)}|^{\mu_i}}{\|x\|_{\varepsilon,\lambda}^t [[x]]_{\varepsilon,\lambda}^s}
\bigg(c_\varepsilon(x)-\eta_\varepsilon(x)\bigg),
\end{align*}
where $c_\varepsilon$ was defined in the proof of Theorem \ref{thm_semi} and 
$$
\eta_\varepsilon(x):=t \frac{1}{\|x\|_{\varepsilon,\lambda}} \sum_{i=1}^k\frac{\lambda_i(x)}{\lambda_i^\varepsilon(x)}\sigma_i x^{(i)} \cdot\triangledown_{x^{(i)}}(\|x\|_{\varepsilon,\lambda} ).
$$ 
We observe that 
\begin{align*}
0\leq\eta_\varepsilon(x)&=t \frac{1}{\|x\|_{\varepsilon,\lambda}} \bigg(\sum_{i=1}^k\frac{\lambda_i(x)}{\lambda_i^\varepsilon(x)}
\frac{|x^{(i)}|^2}{|x^{(i)}|^2+\varepsilon}(|x^{(i)}|^2+\varepsilon)^{\prod_{j\neq i}\sigma_j }\bigg)\|x\|_{\varepsilon,\lambda}^{1-2\prod_{j=1}^k\sigma_k}\\
&\leq t\frac{1}{\|x\|_{\varepsilon,\lambda}}\bigg( \sum_{i=1}^k
(|x^{(i)}|^2+\varepsilon)^{\prod_{j\neq i}\sigma_j }\bigg)\|x\|_{\varepsilon,\lambda}^{1-2\prod_{j=1}^k\sigma_k}=t
\end{align*}
and consequently, it follows from the proof of Theorem \ref{thm_semi} that
$$
c_\varepsilon(x)-\eta_\varepsilon(x) \geq N_1+\mu_1-s -t>0.
$$
Moreover, we have
$$
\lim_{\varepsilon\rightarrow 0}(c_\varepsilon(x)-\eta_\varepsilon(x))=Q+ \sum_{i=1}^k\sigma_i\mu_i-s-t.
$$
By our assumptions $Q>s+t-\sum_{i=1}^k\sigma_i\mu_i,$ which implies that $\textnormal{div}_{\lambda} h_\varepsilon>0$
for all sufficiently small $\varepsilon>0.$ 
Lemma \ref{lem0} applied to the function $h_\varepsilon$ leads to the inequality 
\begin{align*}
&\frac{1}{p^p}\int_\Omega(c_\varepsilon(x)-\eta_\varepsilon(x))\frac{\prod_{i=1}^k(|x^{(i)}|^{\mu_i}}{\|x\|_{\varepsilon,\lambda}^t[[x]]_{\varepsilon,\lambda}^s}|u(x)|^p\ dx\\
 \leq &\ \int_\Omega\frac{1}{(c_\varepsilon(x)-\eta_\varepsilon(x))^{(p-1)}}
\psi_\varepsilon(x)\left|\triangledown_{\lambda} u(x)\right|^p\ dx,
\end{align*}
where 
\begin{align*}
\psi_\varepsilon(x)&=\frac{\prod_{i=1}^k|x^{(i)}|^{\mu_i}}{\|x\|_{\varepsilon,\lambda}^t [[x]]_{\varepsilon,\lambda}^{s}}
\bigg(\sum_{i=1}^k\frac{\sigma_i^2|x^{(i)}|^2}{\lambda_i^\varepsilon(x)^2}\bigg)^{\frac{p}{2}}\\
&\leq \frac{\prod_{i=1}^k|x^{(i)}|^{\mu_i}}{\|x\|_{\lambda}^t [[x]]_{\lambda}^{s}}
\bigg(\sum_{i=1}^k\frac{\sigma_i^2|x^{(i)}|^2}{\lambda_i(x)^2}\bigg)^{\frac{p}{2}}=\psi(x).
\end{align*}
By taking the limit $\varepsilon$ tends to zero the statement of the theorem follows from the dominated convergence theorem.
\end{proof}

\begin{remark}
The first condition on the exponents in Theorem 
\ref{thm_dist} allows to derive the uniform estimates for $\eta_\epsilon(x)$ in the proof, while the condition
\eqref{cond2} ensures that 
$\psi$ belongs to $L^1_{loc}(\Omega).$
\end{remark}

Finally, we formulate Hardy type inequalities without weights.

\begin{theorem}
Let  $N_1>p>1.$ Then, for every $u\in\mathring{W}_\lambda^{1,p}(\Omega)$ we have  
\begin{align*}
\left(\frac{N_1-p}{p}\right)^p\int_\Omega\frac{|u(x)|^p  }{|x^{(1)}|^{p}}\ dx
&\leq\int_\Omega |\triangledown_{\lambda} u(x)|^p\ dx,\\
\left(\frac{N_1-p}{p}\right)^p\int_\Omega\frac{|u(x)|^p  }{\|x\|_{\lambda}^{p}}\ dx
&\leq\int_\Omega |\triangledown_{\lambda} u(x)|^p\ dx.
\end{align*}
%Moreover, the constant in the first inequality is optimal.
\end{theorem}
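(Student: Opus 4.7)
The plan is to apply Lemma \ref{lem0} with a vector field supported only in the first block of variables, exploiting the crucial fact that $\lambda_1\equiv 1$. Concretely, I would take
$$
h(x)=\left(\frac{x^{(1)}}{|x^{(1)}|^{p}},\,0,\dots,0\right),
$$
or rather its standard regularization $h_\varepsilon$ with $|x^{(1)}|^{2}$ replaced by $|x^{(1)}|^{2}+\varepsilon$ in the denominator, to handle the singularity at $x^{(1)}=0$, exactly as in the proofs of Theorems \ref{thm_semi} and \ref{thm_dist}. Since only the first component of $h$ is nonzero and $\lambda_{1}=1$, the relation $\textnormal{div}_{\lambda}h=\textnormal{div}_{x^{(1)}}h^{(1)}$ reduces to the classical Euclidean computation, giving $\textnormal{div}_{\lambda}h=(N_{1}-p)/|x^{(1)}|^{p}$, which is strictly positive by the hypothesis $N_{1}>p$. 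A direct calculation then yields $|h|=|x^{(1)}|^{1-p}$, so that the ratio simplifies beautifully:
$$
\frac{|h(x)|^{p}}{(\textnormal{div}_{\lambda}h(x))^{p-1}}=\frac{1}{(N_{1}-p)^{p-1}},
$$
i.e.\ it is a \emph{constant}. This is precisely what delivers an unweighted right-hand side. Substituting into Lemma \ref{lem0} and rearranging gives the first inequality, with the regularization handled by dominated convergence just as in Theorems \ref{thm_semi} and \ref{thm_dist}, and then extended to $\mathring{W}^{1,p}_{\lambda}(\Omega)$ by density.

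For the second inequality I would not redo the variational argument but deduce it from the first one by a pointwise comparison. From the definitions \eqref{dist1} and \eqref{dist2}, together with $\sigma_{1}=1$, one sees immediately that
$$
\|x\|_{\lambda}^{\,2\prod_{i=1}^{k}\sigma_{i}}\;\ge\;|x^{(1)}|^{\,2\prod_{i\neq 1}\sigma_{i}},
$$
i.e.\ $\|x\|_{\lambda}\ge |x^{(1)}|$ for all $x\in\R^{N}$. Raising to the power $p$ and inverting yields $\|x\|_{\lambda}^{-p}\le |x^{(1)}|^{-p}$, so the second inequality is a consequence of the first.

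The only slightly delicate point is the regularization step: one must check that $c_{\varepsilon}(x):=(N_{1}-p|x^{(1)}|^{2}/(|x^{(1)}|^{2}+\varepsilon))$ remains uniformly bounded below by a positive constant on $\Omega$, in order to obtain a uniform upper bound on $|h_{\varepsilon}|^{p}/(\textnormal{div}_{\lambda}h_{\varepsilon})^{p-1}$ suitable for the dominated convergence theorem. But this is immediate from $N_{1}>p$, so no genuine obstacle arises and the whole argument is essentially a streamlined version of the proofs already given.
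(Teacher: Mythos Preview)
Your proposal is correct and follows essentially the same approach as the paper: the same regularized vector field $h_\varepsilon(x)=\big((|x^{(1)}|^2+\varepsilon)^{-p/2}x^{(1)},0,\dots,0\big)$ is plugged into Lemma~\ref{lem0}, the limit $\varepsilon\to 0$ is taken via dominated convergence, and the second inequality is deduced from the first by the pointwise comparison $\|x\|_\lambda\ge |x^{(1)}|$.
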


\begin{proof}
It suffices to prove the first inequality.
The second inequality is an immediate consequence of the first, since the norms satisfy 
$\|x\|_{\lambda}\geq |x^{(1)}|,$ $x\in\R^N.$
We define the function 
$$
h_\varepsilon(x):=\frac{1}{(|x^{(1)}|^2+\varepsilon)^{\frac{p}{2}}}\left(x^{(1)},0,\dots,0\right)
$$
and compute 
\begin{align*}
\textnormal{div}_{\lambda} h_\varepsilon(x)&=\frac{N_1-p\frac{|x^{(1)}|^2}{|x^{(1)}|^2+\varepsilon}}{(|x^{(1)}|^2+\varepsilon)^{\frac{p}{2}}}>0,\\
|h_\varepsilon(x)|&=\frac{|x^{(1)}|}{(|x^{(1)}|^2+\varepsilon)^{\frac{p}{2}}}.
\end{align*}
Since $N_1>p$ we have $\textnormal{div}_{\lambda} h_\varepsilon>0,$ 
and Lemma \ref{lem0} applied to $h_\varepsilon$ yields the inequality
\begin{align*}
&\frac{1}{p^p}\int_\Omega \left(N_1-p \frac{|x^{(1)}|^2}{|x^{(1)}|^2+\varepsilon}\right)\frac{|u(x)|^p  }{(|x^{(1)}|^2+\varepsilon)^{\frac{p}{2}}}\ dx \\
\leq & \int_\Omega\left(N_1-p \frac{|x^{(1)}|^2}{|x^{(1)}|^2+\varepsilon}\right)^{-(p-1)} \frac{|x^{(1)}|^p}{(|x^{(1)}|^2+\varepsilon)^{\frac{p}{2}}}
|\triangledown_{\lambda} u(x)|^p\ dx.
\end{align*}
The first inequality of the theorem now follows from the dominated convergence 
theorem by taking the limit $\varepsilon$ tends to zero.
\end{proof}

\begin{appendix}
\section{ Some Remarks on the Optimality of the Constant}\label{sec_optimal}

For the particular case of Grushin type operators D'Ambrosio 
proved in \cite{DAm} that the constants in the inequalities  in Theorem \ref{thm_semi} are optimal. 
The optimality was shown similarly to the classical case 
using the explicit form of the function for which the Hardy inequality becomes an equality. 
This function does not belong to the Sobolev space $H_0^1(\Omega),$ but an approximating sequence in $H_0^1(\Omega)$
is used in the proof. Moreover, the function is strongly related to the fundamental solution  at the origin.
For more general $\Delta_\lambda$-Laplacians this function as well as the fundamental solution
are unknown, and at present we are not able to prove that our Hardy type inequalities are sharp.
\\

Using the fundamental solution at the origin
the following observations yield a simple proof for Hardy inequalities. 
We will only consider the case $p=2$ here.

Let $\lambda$ be of the form \eqref{lam}, $\Omega\subset\R^N$ be a domain,  $N\geq 3,$ and $\Phi$ be the fundamental solution at the origin of 
$-\Delta_\lambda$ on $\Omega,$ i.e.,
\begin{align*}
-\Delta_\lambda \Phi&=c\delta_0,\\
\Phi&>0,
\end{align*}
for some constant $c>0,$ where $\delta_0$ denotes the Dirac delta function. 
Moreover, let $u\in C^1_0(\Omega)$ and $v:=u\Phi^{-\frac{1}{2}}.$
Then,  the following identities follow from integration by parts and the properties of the fundamental solution 
(see \cite{AdSe} for the case of the classical Laplacian),
\begin{align}\label{fundamental}
\begin{split}
\int_\Omega |\triangledown_\lambda u|^2dx&=\frac{1}{4} \int_\Omega\frac{|\triangledown_\lambda \Phi|^2}{|\Phi|^2}u^2dx+
\frac{1}{2} \int_\Omega\triangledown_\lambda \Phi \triangledown_\lambda(v^2)dx+\int_\Omega|\triangledown_\lambda v|^2\Phi dx \\
&=\frac{1}{4} \int_\Omega\frac{|\triangledown_\lambda \Phi|^2}{|\Phi|^2}u^2dx+
\frac{1}{2} cv^2(0)+\int_\Omega|\triangledown_\lambda v|^2\Phi dx \\
&=\frac{1}{4} \int_\Omega\frac{|\triangledown_\lambda \Phi|^2}{|\Phi|^2}u^2dx
+\int_\Omega|\triangledown_\lambda v|^2\Phi dx\geq\frac{1}{4} \int_\Omega\frac{|\triangledown_\lambda \Phi|^2}{|\Phi|^2}u^2dx,
\end{split}
\end{align}
where we used that $v(0)=u(0)\Phi(0)^{-\frac{1}{2}}=0.$ 

%
%In particular, the fundamental solution at the origin of the classical Laplacian is $\Phi(x)=\frac{1}{|x|^{N-2}},$
%we obtain 
%$$
%\frac{|\triangledown \Phi(x)|^2}{|\Phi(x)|^2}=\frac{(N-2)^2}{|x|^{2}},
%$$
%and \eqref{fundamental} implies the classical Hardy inequality with optimal constant.
%To show the optimality of the constant $\frac{(N-2)^2}{4}$ we consider the identity 
%$$
%\int_\Omega |\triangledown u(x)|^2 -\frac{(N-2)^2}{4}\frac{u(x)^2}{|x|^2} \, dx=
%\int_\Omega \Big|\triangledown u(x) +\frac{N-2}{2}\frac{x}{|x|^2}u(x)\Big|^2dx.
%$$
%%where $\varphi(x)= \frac{N-2}{2}\frac{x}{|x|^2}.$
%A solution of the equation 
%$$
%\triangledown u(x) =-\frac{N-2}{2}\frac{x}{|x|^2}u(x)
%$$
%is the function
%$$
%u(x)=\frac{1}{|x|^{\frac{N-2}{2}}}.
%$$
%For this function the Hardy inequality becomes an equality, 
%but it does not belong to $H^1(\Omega)$ if the domain $\Omega$ contains the origin $\{0\}.$ 
%Minimizing sequences are typically used to show that the inequality is sharp, e.g., sequences of the form 
%$$
%u_\varepsilon(x)=\frac{1}{(\varepsilon+|x|^2)^{\frac{N-2}{4}}}- \frac{1}{(\varepsilon+1)^{\frac{N-2}{4}}},\qquad \varepsilon>0
%$$ 
%(see \cite{BrVa} or \cite{DAm}).
%\\

The fundamental solution at the origin for the Grushin-type operator
$$
\Delta_\lambda=\Delta_x+|x|^{2\alpha}\Delta_y,\qquad \alpha\geq 0,\ z= (x,y)\in\R^{N_1}\times\R^{N_2}
$$
is of the form
$$
\Phi(x,y)=\frac{c}{[[(x,y)]]_\lambda^{Q-2}},
$$
for some constant $c\geq0$ (see \cite{DAmLu}).  
%The identity 
%\begin{align*}
%\int_\Omega |\triangledown_\lambda u|^2dz&=\frac{1}{4} \int_\Omega\frac{|\triangledown_\lambda \Phi|^2}{|\Phi|^2}u^2\, dz
%+\int_\Omega|\triangledown_\lambda v|^2\Phi\, dz,
%\end{align*}
%where $v=u\Phi^{-\frac{1}{2}},$ can be shown like 
The estimate \eqref{fundamental} 
implies the weighted Hardy type inequality 
\begin{align*}
\int_\Omega |\triangledown_\lambda u(z)|^2dz&\geq \frac{1}{4} \int_\Omega\frac{|\triangledown_\lambda \Phi(z)|^2}{|\Phi(z)|^2}u(z)^2\, dz= 
\frac{(Q-2)^2}{4} \int_\Omega\frac{|x|^{2\alpha}}{[[(x,y)]]_\lambda^{2(1+\alpha)}}u(z)^2dz,
\end{align*}
which is a particular case of the inequalities in Theorem \ref{thm_semi}.
To show the optimality of the constant we consider the identity
\begin{align*}
\int_\Omega\left| \triangledown_\lambda u(z) -\varphi(z)u(z)\right|^2 dz
&=\int_\Omega\left| \triangledown_\lambda u(z)\right|^2 +|u(z)|^2 \left( |\varphi(z)|^2 + \textnormal{div}_\lambda\varphi(z) \right) dz
\end{align*}
and observe that the function 
$$
\varphi(x,y)=-\frac{Q-2}{2}\frac{|x|^{2\alpha}}{[[(x,y)]]_\lambda^{2(1+\alpha)}}\left(x,\frac{(1+\alpha)y}{|x|^\alpha}\right),
$$
which we applied in the proof of Theorem \ref{thm_semi},
satisfies 
$$
 |\varphi(x,y)|^2 + \textnormal{div}_\lambda\varphi(x,y)=-\left(\frac{Q-2}{2}\right)^2\frac{|x|^{2\alpha}}{[[(x,y)]]_\lambda^{2(1+\alpha)}}.
$$
A solution of the equation
$$
\triangledown_\lambda u(x,y) =-\frac{Q-2}{2}\frac{|x|^{2\alpha}}{[[(x,y)]]_\lambda^{2(1+\alpha)}}\left(x,\frac{(1+\alpha)y}{|x|^\alpha}\right)u(x,y)
$$
is the function
$$
u(x,y)=\frac{1}{[[(x,y)]]_\lambda^{\frac{Q-2}{2}}},
$$
which was used in \cite{DAm} to prove the optimality of the constant. 
It transforms the Hardy inequality into an equality, but does not belong to the class $\mathring W_\lambda^{1,2}(\Omega)$ if the 
domain $\Omega$ contains the origin (see \cite{DAm}, p.728).
\\

The fundamental solution for general $\Delta_\lambda$-Laplacians is unknown. 
Assuming that there exists a homogeneous distance from the origin $d_\lambda$ such that the 
fundamental solution is given by $\Phi=d_\lambda^{2-Q}$ we obtain
$$
\frac{|\triangledown_\lambda \Phi(x)|^2}{|\Phi(x)|^2}=(Q-2)^2\frac{|\triangledown_\lambda d_\lambda(x)|^2}{|d_\lambda(x)|^{2}},
$$
and \eqref{fundamental} implies the Hardy type inequality
\begin{align*}
\int_\Omega |\triangledown_\lambda u(x)|^2dx&\geq \frac{(Q-2)^2}{4} \int_\Omega\frac{|\triangledown_\lambda d_\lambda(x)|^2}{|d_\lambda(x)|^2}|u(x)|^2\, dx.
\end{align*}
Consequently, if the fundamental solution was known we could define the distance $d_\lambda:=\Phi^{\frac{1}{2-Q}}$
and compute explicit, weighted Hardy inequalities.

On the other hand,  suitable to analyze the optimality of the constants in our family of Hardy type inequalities 
is the relation
\begin{align}\label{id}
\int_\Omega\left|\frac{\varphi (x)}{\psi(x)}u(x)-\psi(x) \triangledown_\lambda u(x)\right|^2 dx =
\int_\Omega\psi(x)^2 \left| \triangledown_\lambda u(x)\right|^2 +
u(x)^2\left( \frac{|\varphi (x)|^2}{\psi(x)^2} + \textnormal{div}_\lambda \varphi(x)\right)dx,
\end{align}
which follows from integration by parts, where $\varphi:\R\rightarrow\R^N$ is a vector field and $\psi:\R\rightarrow\R$
a scalar function. 
Comparing with the first inequality in Theorem \ref{thm_semi} 
we choose
$$
\psi(x)^2 =\frac{[[x]]_\lambda^{2(1+\sum_{i=1}^k(\sigma_i-1))-s}}{\prod_{i=1}^k|x^{(i)}|^{2\sum_{j=1}^k\alpha_{ji}- \mu_i}},
$$
and observe that the function 
 $$
\varphi(x)=-\frac{Q-s+\sum_{i=1}^k\sigma_i\mu_i}{2}
\frac{\prod_{i=1}^k|x^{(i)}|^{\mu_i}}{[[x]]_\lambda^s}
\left(\frac{\sigma_1x^{(1)}}{\lambda_1(x)},\dots, \frac{\sigma_kx^{(k)}}{\lambda_k(x)}\right),
$$
which we used to prove the theorem, satisfies 
$$
\left( \frac{|\varphi (x)|^2}{\psi(x)^2} + \textnormal{div}_\lambda \varphi(x)\right)=
-\left(\frac{Q-s+\sum_{i=1}^k\sigma_i\mu_i}{2}\right)^2\frac{\prod_{i=1}^k|x^{(i)}|^{\mu_i}}{[[x]]_\lambda^s}.
$$
Consequently, the Hardy type inequality in Theorem \ref{thm_semi} is an equality if $u$ is a solution of the equation 
$$
\triangledown_\lambda u(x)=\frac{\varphi (x)}{\psi(x)^2}u(x),
$$
i.e.,
\begin{align}\label{problem}
\triangledown_{x^{(i)}}u(x)=-\frac{Q-s+\sum_{i=1}^k\sigma_i\mu_i}{2} \frac{\prod_{j\neq i}\lambda_j(x)^2}{\|x\|_\lambda^{2(1+\sum_{i=1}^k(\sigma_i-1))}}\sigma_i x^{(i)}u(x),
\qquad i=1,\dots,k.
\end{align}
Except for Grushin type operators we are unable to solve this equation, not even for the particular $\Delta_\lambda$-Laplacians in Examples \ref{ex1} to \ref{ex3}. 

\end{appendix}

\section*{Acknowledgements}
We would like to thank Prof. Enrique Zuazua for valuable discussions and remarks and
Prof. Ermanno Lanconelli and Prof. Enzo Mitidieri for some helpful comments.

\end{document}